\theoremstyle{plain}
\newtheorem*{question}{Question}
\newtheorem*{conjecture}{Conjecture}
\numberwithin{equation}{section}
\newtheorem{proposition}[equation]{Proposition}
\newtheorem{theorem}{Theorem}
\newtheorem{lemma}[equation]{Lemma}
\newtheorem{corollary}[equation]{Corollary}
\theoremstyle{definition}
\newtheorem{remark}[equation]{Remark}
\newtheorem{example}[equation]{Example}
\def    \R  {{\Bbb R}}
\def    \Z  {{\Bbb Z}}
\def    \CP {{\Bbb {CP}}}
\def    \C  {{\Bbb C}}
\def    \N  {{\Bbb N}}
\def    \index  {{\operatorname{index}}}
\def    \Tilde  {\widetilde}
\def    \ut     {\Tilde{u}}
\def    \Gt     {\Tilde{G}}
\begin{document}
\title[Hamiltonian circle actions with minimal isolated fixed points] {Hamiltonian circle actions with minimal isolated fixed points}

\author{Hui Li} 
\address{School of Mathematical Sciences\\
                Soochow University, Suzhou\\
                215006, China}
        \email{hui.li@suda.edu.cn}

\thanks{2010 classification.  53D05, 53D20, 55N25, 57R20, 32H02} \keywords{Symplectic manifold,  Hamiltonian circle action, equivariant cohomology, Chern classes, K\"ahler manifold}
\begin{abstract}
Let the circle act in a Hamiltonian fashion on a compact symplectic manifold $(M, \omega)$ of dimension $2n$. Then the $S^1$-action has at least $n+1$ fixed points.
We study the case when the fixed point set  consists of precisely $n+1$ isolated points. We first show certain equivalence on the first Chern class of $M$ and some particular weight of the $S^1$-action at some fixed point. 
Then we show that the particular weight  can completely determine the integral cohomology ring of $M$, the total Chern class of $M$, and the sets of weights of the $S^1$-action at all the fixed points. We will see that all these data are isomorphic to those of known examples, $\CP^n$, or $\Gt_2(\R^{n+2})$ with $n\geq 3$ odd,  equipped with standard circle actions. 
\end{abstract}

 \maketitle

\section{Introduction}
Let the circle act on a compact  $2n$-dimensional symplectic manifold $(M, \omega)$ with moment map $\phi$. The moment map $\phi$ is a perfect Morse-Bott function with critical set
being the fixed point set $M^{S^1}$ of the $S^1$-action.
Since the even Betti numbers of $M$, $b_{2i}(M)\geq 1$ for
all $0\leq 2i\leq 2n$, $M^{S^1}$ contains
at least $n+1$ points. When $M^{S^1}$ consists of exactly $n+1$ isolated points, we say that the action has {\bf minimal} isolated fixed points.
Two families of known examples of compact Hamiltonian $S^1$-manifolds with minimal isolated fixed points are $\CP^n$, and $\Gt_2(\R^{n+2})$ with $n \geq 3$ odd, see Examples~\ref{CP^n} and \ref{grasso}.  

When a manifold admits a group action, the geometry and topology of the manifold and the group action have constraints on each other. In this paper,
for Hamiltonian $S^1$-actions on compact symplectic manifolds with minimal isolated fixed points, we study the interplay between the geometry and topology of the symplectic manifolds and the $S^1$-actions.

For a symplectic $S^1$-manifold $(M, \omega)$ of dimension $2n$ with isolated fixed points, a neighborhood of each fixed point $P$ is $S^1$-equivariantly diffeomorphic to a neighborhood of the tangent space at $P$ with an $S^1$-linear action. The tangent space at $P$ splits into $n$ copies of $\C$, on each of which $S^1$ acts by multiplication by $\lambda^{w_i}$, where $\lambda\in S^1$ and $w_i\in\Z$ for
$i=1, \cdots, n$.
The integers $w_i$'s are well defined, and are called the {\it  weights} of the $S^1$-action at $P$. 

The topological and geometrical data we concern are the integral cohomology ring
and total Chern class of the manifold, and the data on the circle action we concern
are the sets of weights at the fixed points. The first Chern class of the manifold is an important data, especially for K\"ahler manifolds, in certain cases, it can determine the biholomorphism type of the manifold.

Let $(M, \omega)$ be a connected compact Hamiltonian $S^1$-manifold
of dimension $2n$ with minimal isolated fixed points, denoted $P_0,  P_1, \cdots, P_n$, let $\phi$ be the moment map. By Lemma~\ref{order}, the fixed points can be labelled so that
\begin{equation}\label{<}
\phi(P_0) < \phi(P_1) < \cdots < \phi(P_n).
\end{equation}
The moment map image is the closed interval $[\phi(P_0), \phi(P_n)]$.
By Lemma~\ref{k|}, if $[\omega]$ is an integral class, then 
$\phi(P_i)-\phi(P_j)$ is an integer for any $i$ and $j$.
In the statements of our main theorems below, we are tacitly referring to (\ref{<}), when we say that the integer  $\phi(P_n)-\phi(P_0)$ occurs as a weight, we mean that a weight of the $S^1$-action at some fixed point is equal to the integer $\phi(P_n)-\phi(P_0)$.

\begin{theorem}\label{equiv}
Let  $(M, \omega)$ be a compact $2n$-dimensional Hamiltonian $S^1$-manifold with fixed point set consisting of $n+1$ isolated points, $P_0,  P_1, \cdots, P_n$, and let $\phi\colon M\to\R$ be the moment map. Assume $[\omega]$ is a primitive integral class. Then for any $i\neq j$, $\phi(P_i)-\phi(P_j)$ is a nonzero integer, and
\begin{enumerate}
\item $c_1(M)=(n+1)[\omega]$ if and only if the integer $\phi(P_n)-\phi(P_0)$ occurs as a weight of the action at some fixed point,
\item $c_1(M)= n[\omega]$ if and only if  
$\phi(P_{n-1})-\phi(P_0)=\phi(P_n)-\phi(P_1)$ holds and this integer occurs as a weight of the action at some fixed point, but 
$\phi(P_n)-\phi(P_0)$ is not a weight at any fixed point. 
\end{enumerate}          
\end{theorem}

For the two cases as in Theorem~\ref{equiv}, we obtain Theorems~\ref{lw1} and \ref{lw2}. Theorem~\ref{lw1} follows from Propositions~\ref{0nring}, \ref{0ni}  and \ref{weights-tc1}. Theorem~\ref{lw2} follows from
Propositions~\ref{01ring}, \ref{01i} and \ref{weights-tc2}. 

\begin{theorem}\label{lw1}
Let  $(M, \omega)$ be a compact $2n$-dimensional Hamiltonian $S^1$-manifold with fixed point set consisting of $n+1$ isolated points, $P_0,  P_1, \cdots, P_n$, and let $\phi\colon M\to\R$ be the moment map. Assume $[\omega]$ is an integral class. If the integer $\phi(P_n)-\phi(P_0)$ occurs as a weight of the $S^1$-action at some fixed point, then all the following are true.
\begin{enumerate}
\item  The integral cohomology ring of $M$ is isomorphic to that of $\CP^n$.
\item The total Chern class of $M$ is isomorphic to that of $\CP^n$.
\item  The sets of weights of the $S^1$-action at all the fixed points on $M$ are isomorphic to those of a standard circle action on $\CP^n$ (as in Example~\ref{CP^n}).
\end{enumerate}
\end{theorem}

\begin{theorem}\label{lw2}
 Let  $(M, \omega)$ be a compact $2n$-dimensional Hamiltonian $S^1$-manifold with fixed point set consisting of $n+1$ isolated points, $P_0,  P_1, \cdots, P_n$, and let $\phi\colon M\to\R$ be the moment map. Assume $[\omega]$ is an integral class. If $\phi(P_{n-1})-\phi(P_0)=\phi(P_n)-\phi(P_1)$ holds and this integer occurs as a weight of the $S^1$-action at some fixed point, but the integer $\phi(P_n)-\phi(P_0)$ is not a weight at any fixed point, then $n\geq 3$ is odd, and all the following are true.
\begin{enumerate}
\item The integral cohomology ring of $M$ is isomorphic to that of  $\Gt_2(\R^{n+2})$.
\item The total Chern class of $M$ is isomorphic to that of  $\Gt_2(\R^{n+2})$.
\item  The sets of weights of the $S^1$-action at all the fixed points on $M$ are isomorphic to those of a standard circle action on $\Gt_2(\R^{n+2})$ (as in Example~\ref{grasso}).
\end{enumerate}
\end{theorem}

 If the symplectic manifold we consider is K\"ahler and the $S^1$-action is holomorphic,  by a theorem of Kobayashi and Ochiai \cite{KO}, and by our work in \cite[Prop. 4.2 and Sec. 5]{L}, we can conclude the following: under  the assumptions of Theorem~\ref{lw1}, $M$ is $S^1$-equivariantly biholomorphic to $\CP^n$, and is $S^1$-equivariantly  symplectomorphic to $\CP^n$; under the assumptions of Theorem~\ref{lw2}, $M$ is $S^1$-equivariantly biholomorphic to $\Gt_2(\R^{n+2})$, and is $S^1$-equivariantly symplectomorphic to 
$\Gt_2(\R^{n+2})$, with $n\geq 3$ odd.

\smallskip

In our theorems, we have three important data: the first Chern class $c_1(M)$,
the known weight, $\phi(P_n)-\phi(P_0)$, or $\phi(P_{n-1})-\phi(P_0)=\phi(P_n)-\phi(P_1)$ (but $\phi(P_n)-\phi(P_0)$ is not a weight), and the integral cohomology ring $H^*(M; \Z)$. (The total Chern class c(M) is a stronger condition than $c_1(M)$, and the set of all the weights is a stronger condition than the singer weight.) Having the equivalence in Theorem~\ref{equiv}, we can see that our theorems imply a few equivalent conditions. Moreover, one important new feature of this work is that we can use the known weight to determine the integral cohomology ring $H^*(M; \Z)$ without knowing all the weights of the $S^1$-action. Observe that one remaining interesting question is: if the integral cohomology ring $H^*(M; \Z)$ is isomorphic to that of $\CP^n$, then do we have any one of the other conditions appeared in Theorems~\ref{equiv} and \ref{lw1}? This is closely related to Petrie's conjecture (\cite{P}), which states that if a homotopy complex projective space $X$ admits a nontrivial circle action, then the total Pontryagin class of $X$ is isomorphic to that of $\CP^n$, i.e., $p(X)\cong p(\CP^n)$, where $2n=\dim X$.  If $(M, \omega)$ is a compact Hamiltonian $S^1$-manifold of dimension $2n$ with isolated fixed points, by \cite{L0}, $M$ is simply connected. If moreover $M$ has the integral cohomology ring of $\CP^n$, then $M$ is homotopy equivalent to $\CP^n$, and 
by Morse theory, this cohomology ring of $M$ implies that the $S^1$-action has precisely $n+1$ fixed points. Hence Petrie's conjecture, in one case, can be stated as follows:
\begin{conjecture}\label{conj}
Let $(M, \omega)$ be a compact symplectic manifold whose integral
cohomology ring is isomorphic to that of $\CP^n$. If $M$ admits a Hamiltonian $S^1$-action with isolated fixed points, then the total Chern class of $M$ is isomorphic to that of $\CP^n$, i.e., $c(M)\cong c(\CP^n)$.
\end{conjecture} 
In low dimensions, we can prove this conjecture, using the method 
of this paper. I hope that our method can help solving this conjecture, directly or by knowing simpler data $c_1(M)$ or the named weight.

We can also pose a corresponding question for the case when the integral cohomology ring of the manifold is isomorphic to that of $\Gt_2(\R^{n+2})$
with $n\geq 3$ odd. Here, instead, let us cite the more general question posed by Tolman in \cite{T} for Hamiltonian circle actions on symplectic manifolds:
\begin{question}
Consider a Hamiltonian circle action on a compact symplectic manifold
$(M, \omega)$ satisfying $H^{2i}(M; \R)=H^{2i}(\CP^n; \R)$ for all $i$. Is
$H^j(M; \Z)=H^{j}(\CP^n; \Z)$ for all $j$? Is the total Chern class $c(M)$ completely
determined by the integral cohomology ring $H^*(M; \Z)$?
\end{question}
 The condition $H^{2i}(M; \R)=H^{2i}(\CP^n; \R)$ for all $i$ in the question is also referred to as the manifold has minimal even Betti numbers, i.e., $b_{2i}(M)=1$ for all $0\leq 2i\leq 2n$. The question does not restrict to the case of isolated fixed points. If we consider the case that the fixed points are isolated, then by Morse theory, there are $n+1$ number of them,  and $H^j(M; \Z)=H^{j}(\CP^n; \Z)$ holds for all $j$, see Lemma~\ref{order}. 

Tolman answers the question above affirmatively in dimension 6, showing that there are $4$ possible kinds of integral cohomology ring structures and total Chern classes (\cite{T}); McDuff shows the existence of the two underlying manifolds other than $\CP^3$ and $\Gt_2(\R^5)$, which are called $V_5$ and $V_{22}$ (\cite{M}). In dimension 8, when there are 5 isolated fixed points, there is
a unique integral cohomology ring and total Chern class, those of $\CP^4$  (\cite{{GS}, {JT}}). The papers \cite{{LT}, {LOS}, {L}} study the question above 
for arbitrary dimensional manifolds when the fixed point set consists of $2$ and $3$ connected components, or study the diffeomorphism types of the manifolds.

Let $(M, \omega)$ be a compact  symplectic manifold of dimension $2n$. Suppose $H^2(M; \Z) = \Z$, and $[\omega]\in H^2(M; \Z)$ is a generator so that  $c_1(M)=k[\omega]$, with $k\in\Z_{\geq 0}$. Suppose $M$ admits a Hamiltonian circle action with $n+1$ isolated fixed points. Then $1\leq k\leq n+1$ (see Lemmas~\ref{gammaij} and \ref{mlarge}, also see \cite{H} and \cite[Prop. 7.5]{GS}). Hence, 
when the fixed points are isolated, suppose $\omega$ is scaled so that $[\omega]$ 
is primitive integral, this paper gives a positive answer to the question above  
in the following two cases: when $c_1(M)=(n+1)[\omega]$ or when $\phi(P_n)-\phi(P_0)$ is a weight, then there is a unique integral cohomology ring and total Chern class, those of $\CP^n$; when $c_1(M)=n[\omega]$ or when $\phi(P_{n-1})-\phi(P_0)= \phi(P_n)-\phi(P_1)$ is a weight and $\phi(P_n)-\phi(P_0)$ is not a weight, then there is a unique integral cohomology ring and total Chern class, those of $\Gt_2(\R^{n+2})$ with $n\geq 3$ odd.

The examples the author knows of the compact symplectic manifolds  which admit Hamiltonian circle actions with minimal isolated fixed points are $\CP^n$ with $n\geq 0$, $\Gt_2(\R^{n+2})$ with $n\geq 3$ odd, $V_5$ and $V_{22}$ in dimension $6$, and a coadjoint orbit of the Lie group $G_2$ in dimension $10$ (see \cite{Mo}). 

Now, let us look at another related work. In \cite{H}, for a compact {\it almost complex} $S^1$-manifold $M$ of dimension $2n$ with $n+1$ isolated fixed points, Hattori uses equivariant K-theory and equivariant cohomology to show that if $c_1(M)=(n+1)x$ 
modulo torsion or $c_1(M)=nx$ modulo torsion for some $x\in H^2(M; \Z)$, then the sets of weights at the fixed points are respectively isomorphic to
those of a standard circle action on $\CP^n$, or on  $\Gt_2(\R^{n+2})$ with $n\geq 3$ odd.  He also obtains rational cohomology information and the total Chern class of the manifold modulo torsion elements. 
In our case of Hamiltonian $S^1$-actions, with our Theorem~\ref{equiv} and Lemma~\ref{order}, using Hattori's results, we can derive our results in Theorems~\ref{lw1} and \ref{lw2}. However, for our case of Hamiltonian $S^1$-actions, starting from the one named weight, we give a direct  proof of Theorems~\ref{lw1} and \ref{lw2}  using symplectic tecniques. This symplectic proof is more geometric, it exhibits the direct and close relationship between the one named weight and all the other data mentioned in the theorems. Together with what we mentioned earlier, our method has its own advantages. I hope our approach can shed some light on the conjecture above.

The paper is organized as follows. In Section~\ref{prelim}, we present some preliminary materials for the paper. In Section~\ref{min}, we give examples of Hamiltonian $S^1$-manifolds with minimal isolated fixed points and prove a key lemma.  In Section~\ref{c1}, we prove Theorem~\ref{equiv}.  In Section~\ref{weightring},  we use the known weight to determine the integral cohomology ring of the manifold. In Section~\ref{weights}, we use the known weight to determine all the weights and to determine the total Chern class of the manifold.

\subsubsection*{Acknowledgement}  
 I would like to thank Sue Tolman for some helpful discussion. I thank the referee  for a suggestion which helps to improve the
exposition of the Introduction. This work is supported by the NSFC grant K110712116.

\section{Preliminaries}\label{prelim}

 In this section,  we briefly review equivariant cohomology, and we state and prove certain facts for compact Hamiltonian $S^1$-manifolds. 

 \smallskip

Let $M$ be a smooth
 $S^1$-manifold. The {\bf equivariant cohomology} of $M$ in a coefficient ring $R$ is
 $H^*_{S^1}(M; R) = H^*(S^{\infty}\times_{S^1} M; R)$, where
 $S^1$ acts on $S^{\infty}$ freely. If $p$ is a point, then $H^*_{S^1}(p; R)= H^*(\CP^{\infty}; R)=R[t]$, where $t\in H^2(\CP^{\infty}; R)$ is a generator.
 If $S^1$ acts on $M$ trivially, i.e., it fixes $M$, then $H^*_{S^1}(M; R)= H^*(M; R)\otimes R[t] =  H^*(M; R)[t]$. The projection map $\pi\colon S^{\infty}\times_{S^1} M\to \CP^{\infty}$ induces a pull back map
$\pi^*\colon H^*(\CP^{\infty}) \to H^*_{S^1}(M)$,
so $H^*_{S^1}(M)$ is an $H^*(\CP^{\infty})$-module.

 Let $(M, \omega)$ be a compact symplectic manifold. There exists an almost complex structure $J\colon TM\to TM$ which is
 {\bf compatible} with $\omega$, i.e., $\omega (J(\cdot ), \cdot)$ is a Riemannian metric. The space of compatible almost complex structures on $(M, \omega)$ is contractible, hence there is well defined total Chern class
$$c(M) = 1 + c_1(M) + \cdots + c_n(M)\in H^*(M; \Z),$$
where $c_i(M) \in H^{2i}(M; \Z)$ is the $i$-th Chern class of $M$.
Let $(M, \omega)$ be a compact symplectic $S^1$-manifold (the action preserves the symplectic form $\omega$), and $F$ be a connected component  of the fixed point set. The normal bundle to $F$ naturally splits into complex line bundles, one line bundle corresponding to one weight. If $(M, \omega)$ is a compact Hamiltonian $S^1$-manifold with moment map $\phi\colon M\to \R$, then $\phi$ is a perfect Morse-Bott function, its critical set coincides with the fixed point set of the action. At each connected component $F$ of the fixed point set, the negative normal bundle of $F$ is the sub-bundle with negative weights;  if $\lambda_F$ is the number of negative weights at $F$, counted with multiplicities,  then the {\bf Morse index of $F$}  is $\bf 2\lambda_F$, which is the dimension of the negative normal bundle to $F$.  Similarly, the {\bf Morse coindex of  $F$} is $2\lambda_F^+$, where 
$\lambda_F^+$ is the number of positive weights at $F$.

 Let $(M, \omega)$ be a compact $2n$-dimensional symplectic $S^1$-manifold. Assume the fixed points are isolated. Let $P$ be a fixed point, and let $\{w_1,  \cdots, w_n\}$ be the set of weights at $P$. We denote the equivariant total Chern class of $M$ as
 $$c^{S^1}(M) = 1 + c_1^{S^1}(M) + \cdots +  c_n^{S^1}(M)\in H^*_{S^1}(M; \Z),$$
where $c_i^{S^1}(M)\in  H^{2i}_{S^1}(M; \Z)$ is the $i$-th equivariant Chern class of $M$.
The restriction of $c^{S^1}(M)$ to $P$ is
$$c^{S^1}(M)|_P = 1 +  \sum_{i=1}^n c_i^{S^1}(M)|_P = 1  + \sum_{i=1}^n\sigma_i(w_1, \cdots,  w_n) t^i,$$
 where $\sigma_i(w_1, \cdots, w_n)$ is the $i$-th symmetric polynomial in the weights at $P$.

\

Next we state and prove some basic results.
First, the symplectic class $[\omega]$ of a Hamiltonian $S^1$-manifold can be extended to an equivariant cohomology class $\ut$: 
\begin{lemma}\cite[Lemma 2.3]{LT}\label{ut}
Let the circle act  on a compact symplectic manifold $(M,\omega)$
with moment map $\phi \colon M \to \R$.  Then there exists $\ut =[\omega -\phi t]\in H_{S^1}^2(M;\R)$ such that for any fixed point set component $F$,
$$\ut|_F =[\omega|_F]  - \phi(F)t.$$
If $[\omega]$ is an integral class, then $\ut$ is an integral class.  
\end{lemma}

For an $S^1$-manifold $M$,  when there exists a finite stabilizer group $\Z_k\subset S^1$, where $k > 1$, the set of points, $M^{\Z_k}$, which is pointwise fixed by $\Z_k$ but not pointwise fixed by $S^1$, is called a  {\bf $\Z_k$-isotropy submanifold}. If a $\Z_k$-isotropy submanifold is a sphere, it is called a  {\bf $\Z_k$-isotropy sphere}. For $k=1$, we have $M^{\Z_1}=M$.

\begin{lemma}\label{k|}
Let the circle act on a connected compact symplectic manifold
$(M, \omega)$ with moment map $\phi\colon M\to\R$. Assume  $[\omega]$ is an integral class. Then for any two fixed point set components $F$ and $F'$, $\phi(F) - \phi(F') \in \Z$.  If $\Z_k$ is the stabilizer group of some point on $M$,  then for any two fixed point set components $F$ and
$F'$ on the same connected component of $M^{\Z_k}$, we have $k\,|\left(\phi(F') - \phi(F)\right)$.
\end{lemma}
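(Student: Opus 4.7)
The plan is to handle the two statements in turn, using a direct $S^1$-invariant 2-sphere construction for the first and reducing the second to the first by restricting to an isotropy submanifold.

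For the first assertion, I would pick any smooth path $\gamma\colon[0,1]\to M$ from a point $q'\in F'$ to a point $q\in F$ (generically chosen to avoid intermediate fixed components) and consider the smooth map $\Phi\colon S^1\times[0,1]\to M$ defined by $\Phi(\theta,t) = \theta\cdot\gamma(t)$. Because $q$ and $q'$ are fixed by the circle, $\Phi$ collapses each end circle $S^1\times\{0\}$ and $S^1\times\{1\}$ to a point, so it factors through a continuous map from a (possibly singular) 2-sphere $S$ to $M$. A direct Fubini computation using the moment map identity $\omega(X,\dot\gamma) = d\phi(\dot\gamma)$ together with the $S^1$-invariance of $\omega$ gives $\int_S\omega = \int_0^1 \frac{d}{dt}\phi(\gamma(t))\,dt = \phi(F)-\phi(F')$ (with $S^1 = \R/\Z$ parameterizing the action in the standard way). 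Since $[\omega]\in H^2(M;\Z)$ and $\Phi_*[S^2]\in H_2(M;\Z)$ is an integral homology class, the integral on the left lies in $\Z$, so $\phi(F)-\phi(F')\in\Z$.

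For the second assertion, let $N$ be the connected component of the isotropy submanifold $M^{\Z_k}$ containing $F$ and $F'$. Then $N$ is a compact $S^1$-invariant symplectic submanifold of $M$ with $[\omega|_N]$ integral as the restriction of an integral class. On $N$ the subgroup $\Z_k$ acts trivially, so the action factors through an effective action of $\tilde S^1 := S^1/\Z_k$, which I identify with $S^1$ via the $k$-fold covering $\lambda\mapsto\lambda^k$. Under this identification the generator of $\tilde S^1$ corresponds to $1/k$ times the generator of the original $S^1$, so the fundamental vector field of the effective action on $N$ is $X|_N/k$ and its moment map is $\phi|_N/k$. The $\tilde S^1$-fixed set on $N$ coincides with the original $S^1$-fixed set on $N$, so applying the first assertion to the effective $\tilde S^1$-action on $(N,\omega|_N)$ gives $(\phi(F)-\phi(F'))/k\in\Z$, i.e.\ $k\mid \phi(F)-\phi(F')$.

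The main obstacle is justifying the sphere-integral identity in the first step: one must verify that the pinched-cylinder image of $\Phi$ is a legitimate integral 2-cycle whose pairing with $[\omega]$ equals $\phi(F)-\phi(F')$, even when $F$ or $F'$ is positive-dimensional, and that the $S^1$-invariance argument cleanly identifies $(\Phi^*\omega)(\partial_\theta,\partial_t)$ with $\frac{d}{dt}\phi(\gamma(t))$ independent of $\theta$. Once this geometric identity is in hand, the second assertion is essentially formal, reducing to the rescaling of the moment map by $1/k$ after passing to the effective action of $S^1/\Z_k$.
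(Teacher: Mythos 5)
Your proof is correct, but your argument for the first assertion takes a genuinely different route from the paper's. The paper simply quotes \cite[Lemma 2.3]{LT}: since $[\omega]$ is integral it admits an integral equivariant extension $\ut$ with $\ut|_{F'} = [\omega|_{F'}] + t\left(\phi(F_0)-\phi(F')\right)$, and integrality of the coefficient of $t$ immediately gives $\phi(F_0)-\phi(F')\in\Z$ for every $F'$, hence the claim. You instead prove the integrality directly: sweeping a path from $F'$ to $F$ around the circle produces a pinched-cylinder $2$-cycle $S$, and the moment map identity together with $S^1$-invariance of $\phi$ gives $\int_S\omega = \pm\left(\phi(F)-\phi(F')\right)$, so integrality of $[\omega]$ finishes the argument. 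This is the classical ``periods of $\omega$ over swept spheres'' computation, and it amounts to a self-contained proof of exactly the piece of the equivariant-extension lemma that is needed here; it avoids equivariant cohomology entirely, at the cost of the smoothing/cycle-legitimacy technicalities you flag, which are standard and not a real gap (the homology class $\Phi_*[S^2]$ is integral, the cylinder integral computes its pairing with the closed form $\omega$, and the computation is insensitive to whether $F$, $F'$ are positive-dimensional or whether $\gamma$ meets other fixed components, so your genericity requirement on $\gamma$ is unnecessary). Your treatment of the second assertion --- restricting to the connected component of $M^{\Z_k}$ containing $F$ and $F'$, passing to the effective $S^1/\Z_k$ action with moment map $\phi/k$, and applying the first assertion --- is exactly the paper's argument.
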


\begin{proof}
Since $M$ is compact, the action has at least two fixed components. Since $[\omega]$ is integral, by 
Lemma~\ref{ut}, we may assume $\phi(F)\in\Z$ for any fixed component $F$. Hence $\phi(F') - \phi(F)\in \Z$ for any two fixed components $F$ and $F'$.

 The isotropy submanifold $M^{\Z_k}$ is a compact Hamiltonian $S^1$-manifold, 
hence contains at least two fixed components. Consider the $S^1/\Z_k\approx S^1$ action on $M^{\Z_k}$, whose moment map is $\phi' = \phi/k$. Since $[\omega|_{M^{\Z_k}}]$ is integral,
 by the first paragraph, for any two fixed components $F$ and $F'$ on the same connected component of $M^{\Z_k}$, we have $\phi'(F') - \phi'(F) \in \Z$, i.e., $\frac{\phi(F')}{k} - \frac{\phi(F)}{k}\in \Z$.
\end{proof}

For a compact Hamiltonian $S^1$-manifold, Tolman gives an inequality on the index of a fixed point set component  \cite[Lemma 3.1]{T}. We only state it for the case of isolated fixed points:
\begin{lemma}\label{ind}
Let the circle act on a connected compact symplectic manifold
$(M, \omega)$ with isolated fixed points and with moment map $\phi\colon M\to\R$. Then for any fixed point $P$, $2\lambda_P\leq 2l$, where $l$ is the number of fixed points $Q$'s such that $\phi(Q) < \phi(P)$.
\end{lemma}

We will use the following lemma in the sequel.

\begin{lemma}\cite[Lemma 2.6]{T}\label{equalmod}
Let the circle act on a compact symplectic manifold $(M, \omega)$. Let $P$ and $Q$ be fixed points which lie on the same connected component of $M^{\Z_k}$ for some $k > 1$. Then the weights of the $S^1$-action at $P$ and $Q$ are equal modulo $k$.
\end{lemma}

\section{examples and basic results}\label{min}

In this section, we give standard examples of compact Hamiltonian $S^1$-manifolds with minimal isolated fixed points, we prove and set up key basic facts
for the next sections.

\begin{example}\label{CP^n}
Consider $\CP^n$. It naturally arises as a coadjoint orbit of $SU(n+1)$, hence it has a K\"ahler  structure and a Hamiltonian $SU(n+1)$ action.
Consider the $S^1\subset SU(n+1)$ action on $\CP^n$ given by
$$\lambda\cdot [z_0, z_1, \cdots, z_n] = [\lambda^{b_0}z_0, \lambda^{b_1}z_1, \cdots, \lambda^{b_n}z_n],$$
where the $b_i$'s are mutually {\it distinct} integers.
This action has $n+1$ isolated fixed points, $P_i = [0,\cdots, 0, z_i, 0, \cdots, 0]$, where $i=0, 1, \cdots, n$. The moment map values of the fixed points of the $S^1$-action are $\phi(P_i)=b_i$, $i=0, 1, \cdots, n$. The set of weights of the $S^1$-action at any $P_i$ is
$$\big\{w_{ij} = \phi(P_j) - \phi(P_i)\big\}_{j\neq i}.$$

As a ring, $H^*(\CP^n; \Z)=\Z[x]/x^{n+1}$, where $\deg (x) = 2$. The total Chern class $c(\CP^n)=(1+x)^{n+1}$, in particular, $c_1(\CP^n) = (n+1)x$.
\end{example}

\begin{example}\label{grasso}
Let $\Gt_2(\R^{n+2})$ be the  Grassmanian of oriented $2$-planes in $\R^{n+2}$, with $n\geq 3$ odd. It naturally arises as a coadjoint orbit of $SO(n+2)$, hence it has a K\"ahler  structure and a Hamiltonian $SO(n+2)$ action.
Consider the $S^1\subset SO(n+2)$ action on $\Gt_2(\R^{n+2})$ induced by the $S^1$-action on
$\R^{n+2}= \R\times\C^{\frac{n+1}{2}}$ given by
$$\lambda\cdot \big(t, z_0, \cdots, z_{\frac{n-1}{2}}\big) = \big(t, \lambda^{b_0}z_0, \cdots, \lambda^{b_{\frac{n-1}{2}}}z_{\frac{n-1}{2}}\big),$$
where the $b_i$'s, with $i\in \{0, \cdots, \frac{n-1}{2}\}$,  are mutually {\it distinct non-zero} integers. This action has $n+1$ isolated fixed points, denoted $P_0$,  $P_1$, $\cdots$, and $P_n$,  where for each $i\in \{0, \cdots, \frac{n-1}{2}\}$,  $P_i$ and $P_{n-i}$ are  given by the plane $(0, \cdots, 0, z_i, 0, \cdots, 0)$ respectively with two different orientations. The moment map values of the fixed points are respectively
$$\phi(P_0)=-b_0,  \cdots, \phi\big(P_{\frac{n-1}{2}}\big) = -b_{\frac{n-1}{2}},\, \phi\big(P_{\frac{n+1}{2}}\big) = b_{\frac{n-1}{2}},\, \cdots, \phi(P_n) = b_0,$$
assuming in the order of increasing.
The set of weights of the $S^1$-action at any
$P_i$, where $i\in\{0, 1, \cdots, n\}$,  is
$$\big\{w_{ij} = \phi(P_j)-\phi(P_i)\big\}_{j\neq i, n-i}\cup \big\{w_{i, n-i}=\frac{1}{2}\big(\phi(P_{n-i})-\phi(P_i)\big)\big\}.$$

As a ring, $H^*\big(\Gt_2(\R^{n+2}); \Z\big)=\Z[x, y]/(x^{\frac{n+1}{2}} - 2y, y^2)$,
where $\deg(x) =2$ and $\deg(y)= n+1$. The total Chern class $c\big(\Gt_2(\R^{n+2})\big) =\frac{(1+x)^{n+2}}{1+2x}$, in particular,
$c_1\big(\Gt_2(\R^{n+2})\big) = nx$.

(For the weights of this example, one may refer to \cite[Example 2.9]{T} for a $T^2$
action on $\Gt_2(\R^5)$ and derive the weights for the $S^1$-action; for higher dimensions, we can obtain the weights similarly. In Sec. 6 of \cite{H}, $\Gt_2(\R^{n+2})$, as a quadratic hypersurface in $\CP^{n+1}$, is given the standard circle action, Hattori writes down the weights of the $S^1$-action at the fixed points. From there, one can obtain the moment map values of the fixed points.)
\end{example}

Next, we prove a key basic lemma.
\begin{lemma}\label{order}
Let the circle act on a compact $2n$-dimensional symplectic manifold $(M, \omega)$ with moment map
 $\phi\colon M\to\R$. Assume $M^{S^1}$ consists of $n+1$ isolated points, denoted $M^{S^1}=\{P_0,  P_1, \cdots, P_n\}$. Then the points in $M^{S^1}$ can be labelled so that $P_i$  has Morse index $2i$, and we have $H^{2i}(M; \Z) = \Z$ and $H^{2i-1}(M; \Z)=0$ for all $0\leq 2i\leq 2n$. 
Moreover, 
\begin{equation}\label{eqorder}
\phi(P_0) < \phi(P_1) < \cdots < \phi(P_n). 
\end{equation}
\end{lemma}

\begin{proof}
Since $M$ is compact and symplectic, $0\neq [\omega^i]\in H^{2i}(M; \R)$, so
\begin{equation}\label{even}
\dim H^{2i}(M)\geq 1,\,\,\forall \,\,0\leq 2i\leq \dim (M).
\end{equation}
The moment map $\phi$ is a perfect Morse function. Since $\phi$ has $n+1$ critical points $P_0, P_1, \cdots, P_n$, each of which has even index, they must respectively have index $0$, $2$, $\cdots$, and $2n$ to make (\ref{even}) to hold.
Moreover, by Morse theory,  the negative disk bundle of $P_i$ is a $2i$-cell for the CW-structure of $M$ given by the Morse flow for any invariant metric, and there are no odd dimensional cells. By cellular cohomology theory,
$H^{2i}(M; \Z)\cong\Z$ and $H^{2i - 1}(M; \Z)=0$ for all $0\leq 2i\leq 2n$.     
The inequality (\ref{eqorder}) follows from Lemma~\ref{ind}.
\end{proof}

\begin{remark}
From now on, when we use the notation $M^{S^1}=\{P_0,  P_1, \cdots, P_n\}$, we mean that $P_i$ is the fixed point of
index $2i$, for any $0\leq i\leq n$.  We will use this fact and the inequality (\ref{eqorder}) so frequently as not to refer to Lemma~\ref{order}.
\end{remark}

For a compact Hamiltonian $S^1$-manifold $M$ with minimal isolated fixed points, by standard method in equivariant cohomology (see \cite{K} and \cite{TW}), 
we know that the extended classes of the equivariant Euler classes of the negative normal bundles of the fixed points form a basis of $H^*_{S^1}(M; \Z)$  as an $H^*(\CP^{\infty}; \Z)$-module. 
Moreover, since $H^*(M^{S^1}; \Z)$ has no torsion, we have $H^*(M; \Z) = H^*_{S^1}(M; \Z)/ (t)$ (see Sec.2 of \cite{LT}), hence the restriction of a basis of $H^*_{S^1}(M; \Z)$ to ordinary cohomology is a basis of $H^*(M; \Z)$. We summarize these as follows.

\begin{proposition}\label{equibase}
Let the circle act on a compact $2n$-dimensional symplectic manifold $(M, \omega)$ with moment map $\phi\colon M\to\R$. Assume $M^{S^1}=\{P_0,  P_1, \cdots, P_n\}$. Then
as an $H^*(\CP^{\infty}; \Z)$-module,  $H^*_{S^1}(M; \Z)$ has a basis 
$\big\{\Tilde\alpha_i\in H^{2i}_{S^1}(M; \Z)\,|\, 0\leq i\leq n\big\}$ such that
$$\Tilde\alpha_i|_{P_i} = \Lambda_i^- t^i, \,\,\mbox{and}\,\,\,\, \Tilde\alpha_i|_{P_j} =0,\,\,\forall\,\, j < i,$$
where $\Lambda_i^-$ is the product of the negative weights at $P_i$.
Moreover, $\big\{\alpha_i = r(\Tilde\alpha_i) \in H^{2i} (M; \Z)\,|\, 0\leq i\leq n\big\}$  is a basis for $H^*(M; \Z)$, where
$r\colon H^*_{S^1}(M; \Z)\to H^*(M; \Z)$ is the natural restriction map.
\end{proposition}

A direct corollary of Proposition~\ref{equibase} is:

\begin{corollary}\label{cor}
Let the circle act on a compact $2n$-dimensional symplectic manifold $(M, \omega)$ with moment map $\phi\colon M\to\R$. Assume $M^{S^1}=\{P_0,  P_1, \cdots, P_n\}$. Let 
$\Tilde\alpha\in H^{2i}_{S^1}(M; \Z)$ be a class such that $\Tilde\alpha|_{P_j}=0$ for all $j<i$. Then
$$\Tilde\alpha =  a_i \Tilde\alpha_i \,\,\,\mbox{for some $a_i\in\Z$}.$$
\end{corollary}

\section{On $c_1(M)$ and the proof of Theorem~\ref{equiv}}\label{c1}

In this section,  for compact Hamiltonian $S^1$-manifold $(M, \omega)$ with isolated fixed points, we first study the relation between $c_1(M)$ and the weights of the $S^1$-action at the fixed points, we then use the results to prove Theorem~\ref{equiv}.

In a symplectic $S^1$-manifold $(M, \omega)$ with isolated fixed points,
if $w>0$ is a weight of the $S^1$-action at a fixed point $P$, $-w$ is a weight of the $S^1$-action at a fixed point $Q$, and $P$ and $Q$ are on the same connected component of
$M^{\Z_w}$, we say that {\bf $w$ is a weight from $P$ to $Q$} or {\bf there is a weight $w$ from $P$ to $Q$}. 
When the signs of $w$ at $P$ and at $Q$ are clear, we will also say that {\bf there is a weight $\pm w$ between $P$ and $Q$}, or {\bf $\pm w$ is a weight between $P$ and $Q$}.

It is known (\cite{H}) that for an almost complex $S^1$-manifold with isolated fixed points, if $W^+$ and $W^-$ are respectively the set of positive weights and negative weights at all the fixed points, then 
$$W^- = - W^+.$$
So if $w$ is a weight at some fixed point, then $-w$ is also a weight at some fixed point. We will use this fact tacitly.

\begin{lemma}\label{sub}
Let the circle act on a connected compact symplectic manifold $(M, \omega)$ with moment map 
$\phi\colon M\to\R$. If $c_1(M)=k[\omega]$, then for any two fixed point set components
$F$ and $F'$,  we have $\Gamma_F - \Gamma_{F'} = k\big(\phi(F')-\phi(F) \big)$,
where $\Gamma_F$ and $\Gamma_{F'}$ are respectively the sums of the weights at $F$ and $F'$.
\end{lemma}
\begin{proof}
We have an equivariant extension $c_1^{S^1}(M) = k\ut +a\, t$  of $c_1(M)=k[\omega]$,
where $\ut$ is as in Lemma~\ref{ut} and $a \in\R$.  Let $f\in F$ and $f'\in F'$ be points. Then
$$c_1^{S^1}(M)|_f = \Gamma_F t = -k\phi(F)t + a\, t,\,\,\,\mbox{and}\,\,\,
c_1^{S^1}(M)|_{f'} = \Gamma_{F'} t= -k\phi(F')t + a\, t.$$
Subtracting the two equalities, we obtain the claim.
\end{proof}

\begin{lemma}\label{gammaij}
Let the circle act on a connected compact symplectic manifold $(M, \omega)$ with moment map $\phi\colon M\to\R$. Assume $M^{S^1}$ consists of isolated points. Let $P, Q\in M^{S^1}$ with $P\neq Q$, where $\index (P)=2i$ and $\index (Q) = 2j$ with $i \leq j$. Assume there is a weight $w$ from $P$ to $Q$,  $-w$ is not a weight at $P$, $-w$  has multiplicity $1$ at $Q$, and $w$ is the largest among the absolute values of all the weights at $P$ and $Q$. If $c_1(M) = k [\omega]$,  then $j-i +1 = k\frac{\phi(Q)-\phi(P)}{w}$.
\end{lemma}

\begin{proof}
Let 
$$W_P^- = \big\{a_1, \cdots, a_i\big\}\,\,\,\mbox{and}\,\,\,  W_P^+ = \big\{a_{i+1}, \cdots, a_n\big\}$$
 be respectively the set of negative weights and positive weights at $P$, and  
$$W_Q^- = \big\{b_1, \cdots, b_j\big\}\,\,\,\mbox{and}\,\,\, W_Q^+ = \big\{b_{j+1}, \cdots, b_n\big\}$$
 be respectively the set of negative weights and positive weights at $Q$.
Assume
$$a_n = w \in W_P^+, \,\,\,\mbox{and}\,\,\,  b_j = -w\in W_Q^-.$$
By Lemma~\ref{equalmod}, 
\begin{equation}\label{modw} 
W_P^-\cup W_P^+ = W_Q^- \cup W_Q^+  \mod w.
\end{equation}
Since $w$ is the largest among the absolute values of all the weights at $P$ and $Q$,  
$-w$ does not occur at $P$ and $-w$ occurs once at $Q$, 
up to a reordering of indices,  (\ref{modw}) can only yield the following relations:
$$a_1 = b_1, \,\,\,\cdots,\,\,\, a_m = b_m \,\,\,\mbox{for some $m$ with $0\leq m \leq i$},$$
$$a_{i+1} = b_{j+1},\,\,\, \cdots,\,\,\, a_{i+l} = b_{j+l} \,\,\,\mbox{for some $l$
with $0\leq l\leq n-j$},$$
$$a_{m+1}  = b_{j +l+1} - w, \,\,\,\cdots, \,\,\,  a_i = b_n-w,$$
$$a_{i+l+1}  = b_{m+1} + w,\,\,\, \cdots, \,\,\, a_{n-1}  = b_{j-1} + w, \,\,\,\mbox{and}$$
$$ a_n = b_j + 2 w.$$
 Let $\Gamma_P$ and $\Gamma_Q$ be respectively the sums of weights at $P$ and at $Q$. Then
 $$\Gamma_P - \Gamma_Q = \sum_{p=1}^n a_p - \sum_{p=1}^n b_p = (j-i +1)w.$$
Combining Lemma~\ref{sub}, we obtain the claim of the lemma.
\end{proof}

\begin{lemma}\label{mlarge}
Let the circle act on a compact symplectic manifold $(M, \omega)$ with moment map $\phi\colon M\to\R$. Assume $M^{S^1}$ consists of isolated points.  Let $w>0$ be the largest among all the weights at all the fixed points. Then there are $P, Q\in M^{S^1}$ with $\phi(P) < \phi(Q)$, such that  there is a weight $w$ from $P$ to $Q$,  $-w$ is not a weight at $P$ and $-w$ has multiplicity $1$ at $Q$.
\end{lemma}
\begin{proof}
Relative to the value of $\phi$, choose the lowest fixed point $P$ which has a weight $w$. Then $-w$ cannot be a weight at $P$. We argue this fact as follows. Suppose $-w$ is a weight at $P$, consider the connected component $C$ of $M^{\Z_w}$ containing $P$. Since $w$ is the largest weight on $M$, each weight at the fixed points in $C$ is $\pm w$. Since $P$ is of index at least $2$ in $C$, $C$ is compact symplectic, and $S^1$-Hamiltonian with moment map $\phi|_C$, 
$C$ must contain a minimum for $\phi|_C$, a fixed point $P'$ which has a lower moment map value and a positive weight $w$, contradicting to the choice of $P$.

Next, relative to the value of $\phi$, choose a closest fixed point $Q$ to $P$ such that $-w$ is a weight at $Q$,  and $P$ and $Q$ are on the same connected component $C$ of $M^{\Z_w}$. The moment map $\phi|_C$ on $C$  has a unique minimum (by the connectivity theorems).
Suppose $\phi(Q) \leq \phi(P)$, then $P$ is not the minimum for $\phi|_C$ on $C$, hence has a weight 
$-w$, contradicting to the first paragraph. Hence $\phi(P) < \phi(Q)$. 
Suppose $-w$ has multiplicity bigger than $1$ at $Q$, then in $C$, $P$ is the minimum, $Q$ has index at least $4$; since $C$ is symplectic, $C$ contains at least an index $2$ fixed point $Q'$ with $\phi|_C (P) < \phi|_C (Q') < \phi|_C (Q)$ (see Lemma~\ref{ind}), so $-w$ is a weight at $Q'$, contradicting to the choice of $Q$.
\end{proof}

Now we are ready to prove Theorem~\ref{equiv}.

\begin{proof}[Proof of Theorem~\ref{equiv}]
First, since $[\omega]$ is integral, by Lemma~\ref{k|}, $\phi(P_i)-\phi(P_j)\in\Z$ for any $i$ and $j$;  if $a$ is a weight between $P_i$ and $P_j$, then $a\,|\,\big(\phi(P_i)-\phi(P_j)\big)$.  By Lemma~\ref{order}, $\phi(P_i)-\phi(P_j)\neq 0$ for any $i\neq j$, and $H^2(M; \Z)=\Z$, together with 
the fact that $[\omega]$ is primitive integral, we know that there exists $k\in\Z$ such that $c_1(M)= k[\omega]$.

Let $w >0$ be the largest among all the weights at all the fixed points.  By Lemma~\ref{mlarge}, $w$ is a weight from some $P_l$ to some $P_m$ with $l < m$, and so $w\,|\,\big(\phi(P_l)-\phi(P_m)\big)$.

{\bf Case 1}.  If $c_1(M) = (n+1) [\omega]$, then by Lemma~\ref{gammaij}, $l=0$, $m=n$, and $w=\phi(P_n)-\phi(P_0)$. Conversely,
if the number $\phi(P_n)-\phi(P_0)$ is a weight at some fixed point, then by the first paragraph, it can only be a weight between $P_0$ and $P_n$, and it is equal to the largest weight $w$. Then Lemma~\ref{gammaij} implies $c_1(M) = (n+1) [\omega]$.

{\bf Case 2}.  Assume $c_1(M) = n [\omega]$. Then $n > 1$, since if $n=1$, $M$ can only be $S^2$ and $c_1(M)=2[\omega]$, a contradiction. So
$n\nmid (n+1)$, and by Lemma~\ref{gammaij}, we have $l=0$, $m=n-1$ and $w=\phi(P_{n-1})-\phi(P_0)$, or
$l=1$, $m=n$ and $w=\phi(P_n)-\phi(P_1)$, and both must hold by symmetry. Conversely, the hypotheses imply that
$w=\phi(P_{n-1})-\phi(P_0)=\phi(P_n)-\phi(P_1)$, and by the first paragraph, we have $l=0$ and $m=n-1$ or $l=1$ and $m=n$. Then  Lemma~\ref{gammaij} gives $c_1(M) = n [\omega]$.
\end{proof}

\section{from the known weight to the ring $H^*(M; \Z)$}\label{weightring}

In this section, we use the named weight to 
determine  the integral cohomology ring of the manifold, i.e., we prove
$(1)$ in Theorems~\ref{lw1} and \ref{lw2}. 

\subsection{The case when $\phi(P_n)-\phi(P_0)$ is a weight}

\
\medskip

In this part, we will first determine the sets of weights at $P_0$ and $P_n$,
then we use that to determine the integral cohomology ring of $M$. We first prove couple of basic lemmas.

Let us keep in mind the inequality (\ref{eqorder}) in Lemma~\ref{order}. We will use it without referring to it.

\begin{lemma}\label{mul1}
Let the circle act on a compact $2n$-dimensional symplectic manifold
$(M, \omega)$ with moment map $\phi\colon M\to\R$. Assume $[\omega]$ is an integral class and $M^{S^1} = \{P_0, P_1, \cdots, P_n\}$. For fixed indices $i$ and 
$j$, assume $a >0$  is a weight from $P_i$ only to some $P_l$ with $i < l < j$, $b >0$ is a weight from only some $P_m$ to $P_j$ with $i< m < j$, and
\begin{equation}\label{sum}
 a = - b + \big(\phi(P_j)-\phi(P_i)\big).
\end{equation}
Then (as numbers)
\begin{equation}\label{ww'}
a = \phi(P_k)-\phi(P_i)\,\,\,\mbox{and}\,\,\, -b = \phi(P_k)-\phi(P_j), \,\,\,\mbox{where $k\in\{l, m\}$}.
\end{equation}
Moreover, in any of the following $2$ cases, the multiplicity of such $a$ and $-b$ is 1.
\begin{enumerate}
\item  If  $a\geq\frac{1}{2}\big(\phi(P_j)-\phi(P_i)\big)$, then the submanifold $M^{\Z_{a}}$
has $P_i$ as the minimum. 
\item If $b\geq\frac{1}{2}\big(\phi(P_j)-\phi(P_i)\big)$, then the submanifold $M^{\Z_b}$ has $P_j$ as the maximum.
\end{enumerate}
\end{lemma}

\begin{proof}
Since $a$ is a weight from $P_i$ to $P_l$, and $b$ is a weight from $P_m$ to $P_j$, by Lemma~\ref{k|}, there exist $k_1, k_2\in\N$ such that
$$k_1 a = \phi(P_l)-\phi(P_i), \,\,\,\mbox{and}\,\,\, k_2 b = \phi(P_j)-\phi(P_m).$$
Taking the sum of these equalities and considering together with (\ref{sum}), it follows that $k_1=1$ or $k_2=1$ or both, in either case, (\ref{ww'}) follows.

Assume $a\geq \frac{1}{2}\big(\phi(P_j)-\phi(P_i)\big)$. In $M^{\Z_a}$, each weight is a multiple of $a$, $P_i$ is the minimal fixed point; since $a = \phi(P_k)-\phi(P_i)$, $P_k$ must be the next fixed point to $P_i$. Since by assumption $a$ is a weight from $P_i$ only to some $P_l$ with $i < l < j$, and $a\geq \frac{1}{2}\big(\phi(P_j)-\phi(P_i)\big)$, $a$ can only divide $\phi(P_k)-\phi(P_i)$, so
 $a$ can only be a weight from $P_i$ to $P_k$. If $a$ has multiplicity bigger than $1$, then it contradicts to  that $M^{\Z_a}$ is a symplectic manifold (or contradicts to Lemma~\ref{ind}).  The multiplicity of $b$ is the same as that of $a$. 
For the second case, the claim follows similarly by looking at $-\phi$.
\end{proof}

\begin{remark}
In Lemma~\ref{mul1}, in (\ref{ww'}),  we do not claim that $a$ is a weight between $P_i$ and $P_k$, and $b$ is a weight between $P_k$ and $P_j$. By assumption, we know at least one of them is true. When $l=m$, then both are true.
\end{remark}

\begin{lemma}\label{nother}
Let the circle act on a compact $2n$-dimensional symplectic manifold
$(M, \omega)$ with moment map $\phi\colon M\to\R$. Assume  $[\omega]$ is an integral class and 
$M^{S^1} = \{P_0, P_1, \cdots, P_n\}$. Assume there is a weight $a=\phi(P_i)-\phi(P_0)$ from $P_0$ to $P_i$ for some $i\neq 0$. Then for any other weight $a'$ at $P_0$, which  could be equal to $a$, there exists $j\neq 0, i$, such that $a'$ is a weight from $P_0$ to $P_j$. A similar claim holds if we replace $P_0$ by $P_n$.
\end{lemma}

\begin{proof}
Let $C$ be a connected component of $M^{\Z_{a'}}$ containing $P_0$. Using Lemma~\ref{JT} below by Jang and Tolman on the symplectic manifold $C$, 
we know that there exists an index $2$ fixed point  in $C$, say $P_j$, such that $a'$ is a weight from $P_0$ to $P_j$.  If $j=i$, then by Lemma~\ref{k|}, $a' | a$, so 
$\index |_C (P_j) \geq 4$,  a contradiction. We can similarly argue using $-\phi$ if we replace $P_0$ by $P_n$.
\end{proof}

\begin{lemma}\cite{JT}\label{JT}
Let the circle act on a closed $2n$-dimensional almost
complex manifold  with isolated fixed points. Let $w$ be the smallest
positive weight that occurs at the fixed points on $M$. Then given any 
$k \in\{0, 1, . . . , n-1\}$, the number of times the weight $-w$ occurs at
fixed points of index $2k+2$ is equal to the number of times the 
weight $+w$ occurs at fixed points of index $2k$.
\end{lemma}

\medskip
We obtain the sets of weights at $P_0$ and $P_n$ as follows.

\begin{lemma}\label{0n1}
Let the circle act on a compact $2n$-dimensional symplectic manifold
$(M, \omega)$ with moment map $\phi\colon M\to\R$. Assume $[\omega]$ is an integral class and $M^{S^1}=\{P_0,  P_1, \cdots, P_n\}$. If the integer $w_{0n}=\phi(P_n)-\phi(P_0)$ occurs as a weight at some fixed point,  then the set of weights at $P_0$ is
\begin{equation}\label{w0}
\big\{w_{0i} = \phi(P_i) -\phi(P_0)\big\}_{0 < i\leq n},
\end{equation}
and the set of weights at $P_n$ is
\begin{equation}\label{wn}
\big\{w_{ni} = \phi(P_i) -\phi(P_n)\big\}_{0\leq i < n}.
\end{equation}
\end{lemma}

\begin{proof}
Since $w_{0n}$ only divides $\phi(P_n)-\phi(P_0)$, it can only be a weight between $P_0$ and $P_n$. Moreover, it must have multiplicity $1$ (otherwise there would exist a symplectic manifold of dimension at least $4$ with only $2$ fixed points $P_0$ and $P_n$).

Let $\big\{\bar w_{0i}\,|\, 1\leq i\leq n\big\}$ be the set of weights at $P_0$, and $\big\{\bar w_{ni}\,|\, 0\leq i\leq n-1\big\}$ be the set of weights at $P_n$.  By Lemma~\ref{nother}, each $\bar w_{0i}\neq w_{0n}$ is a weight between $P_0$ and some $P_l$, with $0 < l < n$, and  each $\bar w_{ni}\neq w_{n0} = - w_{0n}$ is a weight between $P_n$ and some $P_m$, with $0 < m < n$. By Lemma~\ref{equalmod},
$$\big\{\bar w_{0i}\,|\, 1\leq i\leq n\big\} = \big\{\bar w_{ni}\,|\, 0\leq i\leq n-1\big\}\mod w_{0n}.$$
So for any $\bar w_{0i}\neq w_{0n}$, there exists a weight at $P_n$, (up to a change of index) let us call it $\bar w_{ni}\neq w_{n0}$, such that
$$\bar w_{0i} =\bar w_{ni} \mod w_{0n}.$$
Since $\bar w_{0i} < w_{0n}$ and $|\bar w_{ni}| < w_{0n}$, we have
\begin{equation}\label{e2}
\bar w_{0i} = \bar w_{ni}  +  w_{0n}.
\end{equation}
By Lemma~\ref{mul1},  there is $k$ with $0 < k < n$ such that
$\bar w_{0i} = \phi(P_k)-\phi(P_0)$ and $\bar w_{ni} = \phi(P_k)-\phi(P_n)$. We rename
$\bar w_{0i}$ as $w_{0k}$, and $\bar w_{ni}$ as $w_{nk}$, i.e., we denote
$$w_{0k} = \phi(P_k)-\phi(P_0)\,\,\,\mbox{and}\,\,\,  w_{nk} = \phi(P_k)-\phi(P_n).$$
Moreover, Lemma~\ref{mul1} also implies that $w_{0k}$ and $w_{nk}$ both have multiplicity $1$.
Since  there are $n$ number of weights at $P_0$, and any weight is of the form above for some index $k$
with multiplicity $1$, the set of weights at $P_0$ must be as claimed in (\ref{w0}).  Similarly, the set of weights at $P_n$ must be as claimed in (\ref{wn}).
\end{proof}

Next, we will use Lemma~\ref{0n1} to determine the ring $H^*(M; \Z)$.

\begin{lemma}\label{ringprod}
Let the circle act on a compact $2n$-dimensional symplectic manifold
$(M, \omega)$ with moment map $\phi\colon M\to\R$. Assume $[\omega]$ is an integral class and $M^{S^1}=\big\{P_0,  P_1, \cdots, P_n\big\}$. Then the following conditions are equivalent:
\begin{enumerate}
\item $H^*(M; \Z) = \Z[x]/x^{n+1}$, where $x=[\omega]$.
\item  $\Lambda_n^- =\prod_{j=0}^{n-1}\big(\phi(P_j)-\phi(P_n)\big)$, where
$\Lambda_n^-$ is the product of the (negative) weights at $P_n$.
\end{enumerate}
\end{lemma}

\begin{proof}
By Lemma~\ref{ut}, we have
 $\prod_{j=0}^{n-1}\big(\ut +\phi(P_j)t\big)\vert_{P_i} = 0$ for all  $i < n$.  
Then Corollary~\ref{cor} gives
\begin{equation}\label{atn}
 \prod_{j=0}^{n-1}\big(\ut +\phi(P_j)t\big) = a_n \Tilde\alpha_n,\,\,\mbox{with $a_n\in\Z$}.
\end{equation}
Restricting (\ref{atn})  to $P_n$ and using Proposition~\ref{equibase}, we get
$$a_n\Lambda_n^- =\prod_{j=0}^{n-1}\big(\phi(P_j)-\phi(P_n)\big).$$ 
Restricting (\ref{atn}) to ordinary cohomology, we get the generator of $H^{2n}(M; \Z)$:
$$\alpha_n =\frac{1}{a_n} [\omega]^n.$$
Hence $(2)$ holds if and only if $a_n =1$, and if and only if  $[\omega]^n$ is a generator of $H^{2n}(M; \Z)$. So $(1)$ implies $(2)$. Conversely, if we have $(2)$,
knowing that $[\omega]^n\in H^{2n}(M; \Z)$ is a generator, and $[\omega]$ is integral, we can see that each $[\omega]^i$ with $0\leq i\leq n$ is integral
and primitive. Together with the facts  $H^{2i}(M; \Z)=\Z$ and $H^{2i-1}(M; \Z)=0$ for all $0\leq i\leq n$ in Lemma~\ref{order}, we obtain $(1)$.
\end{proof}

\begin{remark}
The conditions in Lemma~\ref{ringprod} are also equivalent to $\Lambda_i^- =\prod_{j=0}^{i-1}\big(\phi(P_j)-\phi(P_i)\big)$  for $\forall \,\, i$. The proof is by considering
the classes $\prod_{j=0}^{i-1}\big(\ut +\phi(P_j)t\big)$ similarly as above.
\end{remark}

Using Lemmas~\ref{0n1} and \ref{ringprod}, we obtain the ring $H^*(M; \Z)$:
\begin{proposition}\label{0nring}
Let the circle act on a compact $2n$-dimensional symplectic manifold
$(M, \omega)$ with moment map $\phi\colon M\to\R$. Assume  $[\omega]$ is an integral class and $M^{S^1}=\big\{P_0,  P_1, \cdots, P_n\big\}$. Assume the integer $w_{0n}=\phi(P_n)-\phi(P_0)$ occurs as a weight of the $S^1$-action at some fixed point.  Then the integral cohomology ring of $M$ is $H^*(M; \Z) = \Z[x]/x^{n+1}$, where $x=[\omega]$.
\end{proposition}

\subsection{The case when the integer $\phi(P_{n-1})-\phi(P_0)=\phi(P_n)-\phi(P_1)$ is a weight but $\phi(P_n)-\phi(P_0)$ is not a weight}
 
\
\
\medskip

In this part, we use the named weight to determine the sets of weights at
$P_0$, $P_1$, $P_{n-1}$ and $P_n$, then we use that to determine  the integral cohomology ring of $M$.

 \medskip

\begin{lemma}\label{10}
Let the circle act on a compact $2n$-dimensional symplectic manifold
$(M, \omega)$ with moment map $\phi\colon M\to\R$. Assume $M^{S^1}=\{P_0, P_1, \cdots, P_n\}$ and $[\omega]$ is a {\it primitive} integral class. Then there is a weight $w = \phi(P_1)-\phi(P_0)$ from $P_0$ to $P_1$, and there is a weight $w'=\phi(P_n)-\phi(P_{n-1})$ from $P_{n-1}$ to $P_n$.
\end{lemma}
\begin{proof}
Let $-w$ be the negative weight at $P_1$. We will show that $w=\phi(P_1)-\phi(P_0)$. The connected component $C$ of $M^{\Z_w}$ containing $P_1$ has $P_1$ as an index $2$ fixed point, and must have $P_0$ as the minimum. By Lemma~\ref{JT} used on $C$, there is a weight $w$ from $P_0$ to $P_1$.
Let $S$ be the invariant gradient sphere (for any invariant metric) from $P_1$ to $P_0$. 
Since $\index (P_i) > 2$ for all $i > 1$, the restriction map
$H^2(M; \Z)\to H^2(S; \Z)$
is an isomorphism,  so $[\omega|_S]$ is primitive integral, hence 
$$\int_{S} \omega = 1.$$
Let $\gamma$ be a gradient line on $S$ from $P_0$ to $P_1$. Since $S^1$ acts on $S-\{P_0, P_1\}$ with order $w$, we have
$$\int_{S} \omega =\frac{1}{w}\int_{\gamma} i_{X_M}\omega = \frac{1}{w}\int_{\gamma} d\phi = \frac{\phi(P_1)-\phi(P_0)}{w},$$
where $X_M$ is the vector field generated by the $S^1$-action.
(The sphere $S$ may not be smooth at $P_0$, here $\int_{S} \omega$ is a pairing of homology and cohomology.) 
Hence $w =\phi(P_1)-\phi(P_0)$. We can similarly prove the other claim.
\end{proof}

We find the sets of weights at $P_0$ and $P_{n-1}$, similarly, at $P_n$ and $P_1$ as follows.

\begin{lemma}\label{0n2}
Let the circle act on a compact $2n$-dimensional symplectic manifold
$(M, \omega)$ with moment map $\phi\colon M\to\R$. Assume $[\omega]$ is an integral class and $M^{S^1}=\{P_0,  P_1, \cdots, P_n\}$. Assume 
$\phi(P_{n-1})-\phi(P_0)=\phi(P_n)-\phi(P_1)$ and this integer 
is a weight at some fixed point, and the integer $\phi(P_n)-\phi(P_0)$ is not a weight at any fixed point. Then
\begin{equation}\label{01=}
\phi(P_n)-\phi(P_{n-1}) = \phi(P_1)-\phi(P_0),
\end{equation}
$[\omega]$ is primitive integral, and for $i=0$, $1$, $n-1$, $n$, the set of weights at $P_i$ is
$$\big\{w_{ij} = \phi(P_j) -\phi(P_i)\big\}_{j\neq i, n-i}\cup\big\{w_{i, n-i}=\frac{1}{2}\big(\phi(P_{n-i})-\phi(P_i)\big)\big\}.$$
\end{lemma}

\begin{proof}
Since $\phi(P_{n-1})-\phi(P_0) =\phi(P_n)-\phi(P_1)$,  (\ref{01=}) holds.

Let $w_{0,n-1}=\phi(P_{n-1})-\phi(P_0)$.
Since $w_{0,n-1}$ only divides $\phi(P_{n-1})-\phi(P_0)$, $w_{0,n-1}$ is a weight from
$P_0$ to $P_{n-1}$. It must have multiplicity $1$, or $w_{0,n-1}$ is the strictly largest weight between $P_0$ and $P_{n-1}$. So there is a 
$\Z_{w_{0,n-1}}$-isotropy sphere $S$ with poles $P_0$ and $P_{n-1}$. Since
$$\int_S [\omega] = \frac{\phi(P_{n-1})-\phi(P_0)}{w_{0,n-1}} =1,$$ 
$[\omega|_S]$ is primitive integral. Hence $[\omega]$ is primitive integral on $M$.

Let $\{\bar w_{0i} \,|\, i\neq 0\}$ be the set of weights at $P_0$ and 
$\{\bar w_{n-1, i}\,|\, i\neq n-1\}$ the set of weights at $P_{n-1}$. Let 
$$\bar w_{0, n-1}=w_{0, n-1}\,\,\, \mbox{and}\,\,\, \bar w_{n-1, 0}=w_{n-1, 0}=-w_{0, n-1}.$$
By Lemma~\ref{10}, we may let 
\begin{equation}\label{ad}
\bar w_{01} = w_{01} = \phi(P_1)-\phi(P_0)\,\,\,\mbox{and}\,\,\, \bar w_{n-1, n} = w_{n-1, n}=\phi(P_n)-\phi(P_{n-1}),
\end{equation}
where $w_{01}$  is a weight  between $P_0$ and $P_1$, and $w_{n-1, n}$ is a weight between $P_{n-1}$ and $P_n$.

By Lemma~\ref{equalmod}, 
\begin{equation}\label{wequal}
 \{\bar w_{0i} \,|\, i\neq 0\}  =\{\bar w_{n-1, i}\,|\, i\neq n-1\}  \mod w_{0,n-1}.
\end{equation}
Note that  all the weights in these sets except $w_{0,n-1}$ and $w_{n-1, 0}$ have
absolute values smaller than $w_{0,n-1}$.
For the positive weight $\bar w_{n-1, n}= w_{n-1, n}$ at $P_{n-1}$ in (\ref{wequal}), the correspondence can only be
\begin{equation}\label{a}
\bar w_{01} = \bar w_{n-1, n} + 0 w_{0,n-1}.
\end{equation}
(which is the same as (\ref{01=}).)
For each positive weight $\bar w_{0i}$ at $P_0$, with $i \neq 0, 1, n-1$, by (\ref{wequal})
and (\ref{a}), there exists a negative weight at $P_{n-1}$, let us call it $\bar w_{n-1, i}$, with $i \neq 0, n-1, n$,  such that
\begin{equation}\label{b}
\bar w_{0i} = \bar w_{n-1, i} + w_{0,n-1}.
\end{equation}
By Lemma~\ref{nother}, each $\bar w_{0i}$ in  (\ref{b}) is a weight between $P_0$ and some $P_m$ with $m\neq 0, 1, n-1$. Each $-\bar w_{n-1, i}$ in (\ref{b}) can only be a weight between some $P_l$ and $P_{n-1}$ with $1\leq l\leq n-2$.  We will discuss (\ref{b}) in the following two possibilities.

\noindent{\bf Case $(1)$}. Assume $2\leq m\leq n-2$. Then by Lemma~\ref{mul1}, there is a $k$ with $1\leq k\leq n-2$ such that
$$\bar w_{0i} = \phi(P_k)-\phi(P_0)\,\,\,\mbox{and}\,\,\, \bar w_{n-1, i}=\phi(P_k)-\phi(P_{n-1}).$$
We rename $\bar w_{0i}$ as $w_{0k}$, and $\bar w_{n-1, i}$ as $w_{n-1, k}$, i.e., 
\begin{equation}\label{rename}
w_{0k}= \phi(P_k)-\phi(P_0)\,\,\,\mbox{and}\,\,\, w_{n-1, k}=\phi(P_k)-\phi(P_{n-1}),\,\,\,\mbox{where $1\leq k\leq n-2$}.
\end{equation}
(If $k=1$, this $w_{01}$ as a number is equal to the one in (\ref{ad}), but here it 
is supposed to be a weight between $P_0$ and some $P_m$ with $2\leq m\leq n-2$.)

\noindent{\bf Case $(2)$}. Assume $m=n$, i.e., $\bar w_{0i}$ in  (\ref{b}) is a weight between $P_0$ and $P_n$. By assumption, 
$\bar w_{0i} < \phi(P_n)-\phi(P_0)$. By Lemma~\ref{k|}, $\bar w_{0i}|\big(\phi(P_n) - \phi(P_0)\big)$, so 
$$a\, \bar w_{0i} = \phi(P_n) - \phi(P_0),\,\,\,\mbox{for some $a$ with $2\leq a\in\N$}.$$
For the corresponding $\bar w_{n-1, i}$ in (\ref{b}), we have
$$b\, \bar w_{n-1, i} = \phi(P_l)-\phi(P_{n-1}), \,\,\,\mbox{with $1\leq l\leq n-2$ and $b\in\N$}.$$

\noindent {\bf Case $(2a)$}.  Assume $b=1$.  We rename $\bar w_{n-1, i}$ as $w_{n-1, l}$, and the corresponding $\bar w_{0i}$ as $w_{0l}$, i.e., we denote
$$w_{0l} = \phi(P_l)-\phi(P_0) \,\,\,\mbox{and}\,\,\, w_{n-1, l} =\phi(P_l)-\phi(P_{n-1})\,\,\,\mbox{with $1\leq l\leq n-2$},$$
where $w_{0l}$ is a weight between $P_0$ and $P_n$, and $-w_{n-1, l}$
is a weight between $P_l$ and $P_{n-1}$.

\noindent {\bf Case $(2b)$}.  Assume $b\geq 2$. Since we also have $a\geq 2$, for (\ref{b}) to hold, the only possibility is that $a=2$, $b=2$, and $l=1$, i.e.,
$$\bar w_{0i} = \frac{1}{2}\big(\phi(P_n)-\phi(P_0)\big)\,\,\,\mbox{and}\,\,\,\bar w_{n-1, i} =\frac{1}{2}\big(\phi(P_1)-\phi(P_{n-1})\big).$$  
We rename $\bar w_{0i}$ as $\bar w_{0n}$ and $\bar w_{n-1, i}$ as  $\bar w_{n-1, 1}$, i.e., we denote
\begin{equation}\label{0n,n-11}
\bar w_{0n} = \frac{1}{2}\big(\phi(P_n)-\phi(P_0)\big)\,\,\,\mbox{and}\,\,\, \bar w_{n-1, 1} =\frac{1}{2}\big(\phi(P_1)-\phi(P_{n-1})\big). 
\end{equation}
Moreover, if it appears, each of $\bar w_{0n}$ and $\bar w_{n-1, 1}$ has multiplicity $1$. In fact, 
since $\bar w_{0n}$ can only divide $\phi(P_n)-\phi(P_0)$, if $\bar w_{0n}$ has multiplicity bigger than $1$, then
there exists an isotropy submanifold of dimension at least $4$ with only $2$ fixed points $P_0$ and $P_n$, a contradiction. 

  Claim 1: For $2\leq j\leq n-2$,  if the $w_{n-1, j}=\phi(P_j)-\phi(P_{n-1})$ in
{\bf Case $(1)$} and  {\bf Case $(2a)$} is a weight at $P_{n-1}$, then it has multiplicity $1$. 

  Claim 2: If $w_{n-1, 1}=\phi(P_1)-\phi(P_{n-1})$ occurs, it cannot have multiplicity bigger than $1$; $w_{n-1, 1}$ and $\bar w_{n-1, 1}$ cannot appear at the same time. 

  Claim 3: {\bf Case $(2b)$} must occur. 

Proof of  Claim 1:  If $|w_{n-1, j}|\geq \frac{1}{2}w_{0, n-1}$, then $w_{n-1, j}$ can only be a weight between $P_{n-1}$ and $P_j$. If taking into account (\ref{01=}), 
we see that $M^{\Z_{|w_{n-1, j}|}}$ has $P_{n-1}$ as the maximum and $P_j$ the next fixed point to $P_{n-1}$, hence  $w_{n-1, j}$ cannot have multiplicity bigger than $1$ by Lemma~\ref{ind} 
(applied to $-\phi$ on $M^{\Z_{|w_{n-1, j}|}}$).  If $|w_{n-1, j}| < \frac{1}{2}w_{0, n-1}$, then by {\bf Case $(1)$} and  {\bf Case $(2a)$}, $w_{0j} >\frac{1}{2}w_{0, n-1}$, so $w_{0j}$ can only be a  
weight between $P_0$ and $P_j$,  by Lemma~\ref{mul1}, $w_{n-1, j}$ (and $w_{0j}$) has multiplicity $1$. 

Proof of  Claim 2:  Suppose $w_{n-1, 1}$ appears with multiplicity bigger than $1$. By {\bf Case $(1)$} and  {\bf Case $(2a)$}, $w_{n-1, 1}$ is a weight between $P_{n-1}$ and $P_1$. Then $M^{\Z_{|w_{n-1, 1}|}}$ 
of dimension at least $4$ either contains only two fixed points $P_1$ and $P_{n-1}$, or it contains $4$ fixed points $P_0$, $P_1$, $P_{n-1}$ and $P_n$. The first case is not possible.  In the latter case, we have $w_{n-1, 1}|w_{01}$, hence
$w_{n-1, 1}|w_{0, n-1}$, so in $M^{\Z_{|w_{n-1, 1}|}}$, $w_{0, n-1}$ is a weight between $P_0$ and $P_{n-1}$, then $P_{n-1}$ has index at least $6$ ($P_1$ has index $2$), again not possible. Now suppose $w_{n-1, 1}$ and $\bar w_{n-1, 1}$ both appear. By  {\bf Case $(2b)$}, $\bar w_{n-1, 1}$ is (also) a weight between $P_{n-1}$ and $P_1$. 
The same argument, by considering $M^{\Z_{|\bar w_{n-1, 1}|}}$, gives a contradiction.
 
Proof of  Claim 3: Suppose {\bf Case $(2b)$} does not occur.  
Since there are $n-1$ number of negative weights at $P_{n-1}$, by Claims 1 and 2, the set of weights at $P_{n-1}$ is 
$$\big\{w_{n-1, j}=\phi(P_j)-\phi(P_{n-1})\big\}_{j\neq n-1}.$$
Similarly, by symmetry (or by using $-\phi$), the set of weights at $P_1$ is
$$\big\{w_{1j}=\phi(P_j)-\phi(P_1)\big\}_{j\neq 1}.$$
Then $\Gamma_1-\Gamma_{n-1} =(n+1)\big(\phi(P_{n-1})- \phi(P_1)\big)$. This contradicts to
Theorem~\ref{equiv} and Lemma~\ref{sub}.

By  Claims 2 and 3, $w_{n-1, 1}=\phi(P_1)-\phi(P_{n-1})$ does not appear.
We may rename $\bar w_{0n}$ as
$w_{0n}$ and $\bar w_{n-1, 1}$ as $w_{n-1, 1}$. Then the claims mean that the set of weights at $P_{n-1}$ is as claimed in the lemma, and correspondingly, the set of weights at $P_0$ is as claimed.  Similarly, by symmetry or by looking at $-\phi$, the sets of weights at $P_n$ and at $P_1$ are as claimed. 
\end{proof}

\begin{lemma}\label{ringprod2}
 Let the circle act on a compact $2n$-dimensional symplectic manifold
$(M, \omega)$ with moment map $\phi\colon M\to\R$.  Assume $[\omega]$ is a
primitive integral class and $M^{S^1}=\{P_0,  P_1, \cdots, P_n\}$. Then the following two conditions are equivalent:
\begin{enumerate}
\item $H^*(M; \Z) = \Z(x, y)/\big(x^{\frac{1}{2}(n+1)}-2y, y^2\big),$
where $n\geq 3$ is odd, $x = [\omega]$, and  $\deg(y) = n+1$.
\item $\Lambda_n^- =\frac{1}{2} \prod_{j=0}^{n-1} \big(\phi(P_j)-\phi(P_n)\big)$.
\end{enumerate}
\end{lemma}

\begin{proof}
We proceed similarly as in the proof of Proposition~\ref{0nring}.  We get that
$(2)$ holds if and only if $\frac{1}{2} [\omega]^n$ is a generator of $H^{2n}(M; \Z)$. If $(1)$ holds, then $\frac{1}{2} [\omega]^n\in H^{2n}(M; \Z)$ is a generator, so $(1)$ implies $(2)$.
Now we prove $(2)$ implies $(1)$. First, by the above we have that $\frac{1}{2} [\omega]^n\in H^{2n}(M; \Z)$ is primitive integral. By Lemma~\ref{order},  
$H^{2i}(M; \Z)=\Z$ and $H^{2i-1}(M; \Z)=0$ for all $0\leq i\leq n$, so the generator
of $H^{2i}(M; \Z)$ is a (rational) multiple of $[\omega]^i$ for $0\leq i\leq n$.
Since $[\omega]$ is also primitive integral, by Poincar\'e duality, $\frac{1}{2}[\omega]^{n-1}$ is primitive integral. 
Then we get that $\frac{1}{2} [\omega]^{n-2}$ is primitive integral, and then 
$[\omega]^2$ is primitive integral. Inductively, we get that the following classes are generators  of the corresponding integral cohomology groups:
$$1,\, [\omega], \,[\omega]^2,\, \cdots,\, [\omega]^{\frac{n-1}{2}},\, \frac{1}{2}[\omega]^{\frac{n+1}{2}},\, \cdots,\, \frac{1}{2}[\omega]^{n-1}, \,\frac{1}{2}[\omega]^n.$$
So $n\geq 3$ and is odd.
If we let $x=[\omega]$ and $y=\frac{1}{2}[\omega]^{\frac{n+1}{2}}$, we get the ring $H^*(M;\Z)$ as claimed in $(1)$.
\end{proof}

\begin{remark}
The conditions in Lemma~\ref{ringprod2} are also equivalent to:
$\Lambda_i^- = \prod_{j=0}^{i-1} \big(\phi(P_j)-\phi(P_i)\big)$  if $i \leq\frac{n-1}{2}$, and
 $\Lambda_i^- =\frac{1}{2} \prod_{j=0}^{i-1} \big(\phi(P_j)-\phi(P_i)\big)$ if $i\geq \frac{n+1}{2}$.
The proof is by considering the classes $\prod_{j=0}^{i-1} \big(\ut + \phi(P_j)t\big)$ for all fixed index $i$. Corollary~\ref{cor} gives
$$\prod_{j=0}^{i-1} \big(\ut + \phi(P_j)t\big) = a_i\Tilde\alpha_i, \,\,\,\mbox{where $a_i\in\Z$}.$$
Restricting this to ordinary cohomology and to $P_i$, we can obtain what we need.
\end{remark}

Now with Lemmas~\ref{0n2} and \ref{ringprod2}, we obtain the ring $H^*(M; \Z)$:

\begin{proposition}\label{01ring}
 Let the circle act on a compact $2n$-dimensional symplectic manifold
$(M, \omega)$ with moment map $\phi\colon M\to\R$.  Assume  $[\omega]$ is an integral class and $M^{S^1}=\{P_0,  P_1, \cdots, P_n\}$. Assume 
$\phi(P_{n-1})-\phi(P_0)=\phi(P_n)-\phi(P_1)$ and this integer is a weight at some fixed point, and $\phi(P_n)-\phi(P_0)$ is not a weight at any fixed point. Then the integral cohomology ring of $M$ is
$$H^*(M; \Z) = \Z(x, y)/\big(x^{\frac{1}{2}(n+1)}-2y, y^2\big),$$
where $n\geq 3$ is odd, $x = [\omega]$,  and  $\deg(y) = n+1$.
\end{proposition}

\section{from the known weight to all the weights and to the total Chern class $c(M)$}\label{weights}

In this section, we determine the sets of weights at all the fixed points, and
we determine the total Chern class of the manifold.

\subsection{The case when $\phi(P_n)-\phi(P_0)$ is a weight}
\

\medskip

Proposition~\ref{0ni} gives the sets of weights at all the fixed points.
As a preparation to prove Proposition~\ref{0ni}, we first prove Lemma~\ref{lempq}.

\begin{lemma}\label{lempq}
Let the circle act on a compact $2n$-dimensional symplectic manifold
$(M, \omega)$ with moment map $\phi\colon M\to\R$. Assume $[\omega]$ is an integral class and $M^{S^1} = \{P_0, P_1, \cdots, P_n\}$.  For a fixed 
$i\geq 0$, assume the set of weights at each $P_k$ with $0\leq k\leq i$
is 
$$\big\{w_{kj}=\phi(P_j)-\phi(P_k) \,|\, j\neq k\big\}.$$
If for each $k$ with $0\leq k < i$, $w_{kj}$ is a weight between $P_k$ and
$P_j$ for any $j\neq k$, then each $w_{ij}$ is a weight between $P_i$ and 
$P_j$ for any $j\neq i$.
\end{lemma}
\begin{proof}
Recall that by Lemma~\ref{k|}, if $w$ is a weight between $P_l$ to $P_m$, then $w|\big(\phi(P_m)-\phi(P_l)\big)$ for any $l$ and $m$.

First assume $i=0$. If some $w_{0j} > \frac{1}{2}w_{0n}$, then $w_{0j}$ can only be a weight from $P_0$ to $P_j$. The claim follows by Lemma~\ref{nother} and induction on the index $j$. (Or see the general argument below.) 

Now assume $i > 0$. By assumption, for each $j < i$, $w_{ij}$ is a weight
between $P_i$ and $P_j$. The assumption also implies that for each $j > i$, $w_{ij}$ is a weight from $P_i$ to some $P_l$ with $l > i$. For $j > i$, if $w_{ij} > \frac{1}{2}w_{in}$, then $w_{ij}$ only divides $\phi(P_j)-\phi(P_i)$,
so it can only be a weight from $P_i$ to $P_j$. Now assume for certain $j_0$, $w_{ij}$ is a weight from $P_i$ to $P_j$ for each $j$ with
$j_0 < j\leq n$. We claim that $w_{ij_0}$ is a weight from $P_i$ to 
$P_{j_0}$. (Note that it can only be a weight from $P_i$ to some $P_l$
with $l\geq j_0$.)
Consider the connected component $C$ of $M^{\Z_{w_{ij_0}}}$ containing $P_i$. Applying Lemma~\ref{JT} on $C$, we get that $w_{ij_0}$ is a weight from $P_i$ to some $P_l$, $l\geq j_0$, with $\index|_C (P_l) = \index|_C(P_i) + 2$. 
Suppose $l > j_0$. Then $w_{ij_0}|\big(\phi(P_l)-\phi(P_i)\big)$, while the latter is $w_{il}$, so $P_l$ has index at least $4$ in $C$.
The assumption implies that the number of fixed points in $C$ below $P_i$ determines the index of $P_i$ in $C$.
Note that $P_k\in C$ with $k < i$ if and only if $w_{ij_0} |\big(\phi(P_k)-\phi(P_i)\big)$ if and only if $w_{ij_0} |\big(\phi(P_l)-\phi(P_k)\big)$. 
So if any $P_k$ with $k<i$ is contained in $C$, then it contributes to the index of $P_i$ and $P_l$ by the same number, hence $\index|_C (P_l)\geq \index|_C (P_i) +4$, a contradiction. Hence $w_{ij_0}$ is a weight from $P_i$ to $P_{j_0}$. The claim follows by induction on $j$.
\end{proof}

The sets of weights at all the fixed points are as follows.
\begin{proposition}\label{0ni}
Let the circle act on a compact $2n$-dimensional symplectic manifold
$(M, \omega)$ with moment map $\phi\colon M\to\R$. Assume $[\omega]$ is an integral class and $M^{S^1}=\big\{P_0,  P_1, \cdots, P_n\big\}$.  Assume the integer $w_{0n}=\phi(P_n)-\phi(P_0)$ is a weight of the $S^1$-action at some fixed point. Then 
the set of weights at any $P_i$ is
\begin{equation}\label{weighti}
\big\{w_{ij} = \phi(P_j) -\phi(P_i)\big\}_{j\neq i}.
\end{equation}
That is, the sets of weights at the fixed points are isomorphic to those of the standard circle action on $\CP^n$ as in Example~\ref{CP^n}.
\end{proposition}

\begin{proof}
First of all, by Lemma~\ref{0n1}, the claim holds for $P_0$ and $P_n$; moreover, by Lemma~\ref{lempq},
$w_{0i}$ is a weight between $P_0$ and $P_i$ for each $i\neq 0$. Similarly,
$w_{ni}$ is a weight between $P_n$ and $P_i$ for each $i\neq n$.

We use induction. Assume for a fixed $i$, the claim (\ref{weighti}) holds for all $P_k$'s with $k < i$, and for each such $k$, $w_{kj} = \phi(P_j)-\phi(P_k)$ is a weight between $P_k$ and $P_j$ for any $j \neq k$.  We will prove all these hold for $k=i$. 
First, the inductive hypotheses imply that the set of negative weights at $P_i$ is
\begin{equation}\label{negi}
\big\{w_{ij} = \phi(P_j)-\phi(P_i) \big\}_{0\leq j < i}.
\end{equation}
If $P_i=P_{n-1}$, then we are done. Otherwise,
since $w_{in} = -w_{ni} > 1$  is a weight between $P_i$ and $P_n$,
by Lemma~\ref{equalmod},
\begin{equation}\label{modin}
\mbox{\{weights at $P_i\} =$\{weights at $P_n\} \mod w_{in}$}.
\end{equation}
First, in (\ref{modin}), we have the correspondence
\begin{equation}\label{0<j<i}
w_{ij} = w_{nj} + w_{in}, \,\,\,\forall \,\, j \,\,\,\mbox{with $0\leq j < i$}.
\end{equation}
Next, the inductive hypothesis and Lemma~\ref{nother} imply that each positive weight at $P_i$ other than $w_{in}$ is a weight from $P_i$ to some $P_k$ with $i < k < n$ and hence is less than $w_{in}$. For each weight $w_{nj}$ at $P_n$ with $i < j < n$, since the negative weights at $P_i$ already occured in (\ref{0<j<i}),  there can only exist a positive weight at $P_i$, denoted $w_{ij}$, such that there is the correspondence in (\ref{modin}):
$$w_{ij} = w_{nj} + w_{in}, \,\,\,\forall \,\, j \,\,\,\mbox{with $i < j < n$},$$
which yields the subset of positive weights at $P_i$:
\begin{equation}\label{posi}
\big\{w_{ij} =\phi(P_j)-\phi(P_i)\big\}_{i < j < n}.
\end{equation}
Equations (\ref{negi}) and (\ref{posi}), and $w_{in}$ give the claim  (\ref{weighti}) 
for $P_i$. By Lemma~\ref{lempq}, each $w_{ij}$ in (\ref{posi}) is a weight between $P_i$ and $P_j$. Hence
in the set of weights at $P_i$, (\ref{weighti}),  each $w_{ij}$ is a weight between $P_i$ and $P_j$.
\end{proof}

Using Proposition~\ref{0ni}, we can determine $c(M)$:

\begin{proposition}\label{weights-tc1}
Let the circle act on a compact $2n$-dimensional symplectic manifold
$(M, \omega)$ with moment map $\phi\colon M\to\R$. Assume $[\omega]$ is an integral class and $M^{S^1}=\big\{P_0,  P_1, \cdots, P_n\big\}$. Assume the integer $w_{0n}=\phi(P_n)-\phi(P_0)$ is a weight of the $S^1$-action at some fixed point. Then the total Chern class of $M$ is $c(M) = (1+[\omega])^{n+1}$, hence is isomorphic to $c(\CP^n)$.
\end{proposition}

\begin{proof}
By the injectivity theorem (\cite{K}, \cite{TW}), the restriction map
$$H^*_{S^1}(M; \Z)\to H^*_{S^1}(M^{S^1}; \Z)$$
is injective for any Hamiltonian $S^1$-manifold $M$ with isolated fixed points. 
Consider the class $\alpha = \prod_{0\leq j\leq n} \big(1+\ut+\phi(P_j)t\big)$. By Proposition~\ref{0ni}, we can check that
$$\alpha|_{P_i} = c^{S^1}(M)|_{P_i},\,\,\,\forall\,\,\, 0\leq i\leq n.$$ Hence 
$c^{S^1}(M)=\prod_{0\leq j\leq n} \big(1+\ut+\phi(P_j)t\big)$. Restricting it to ordinary cohomology, we obtain our claim. 
\end{proof}

\subsection{The case when the integer $\phi(P_{n-1})-\phi(P_0)=\phi(P_n)-\phi(P_1)$ is a weight but $\phi(P_n)-\phi(P_0)$ is not a weight}
\

\medskip
Similar to the last subsection, we first prove Lemma~\ref{ktoi} in order to  prove
Proposition~\ref{01i} on the sets of weights at all the fixed points.

\begin{lemma}\label{ktoi}
 Let the circle act on a compact $2n$-dimensional symplectic manifold
$(M, \omega)$ with moment map $\phi\colon M\to\R$, where $n\geq 3$ is odd.  
Assume $[\omega]$ is an integral class and $M^{S^1}=\{P_0,  P_1, \cdots, P_n\}$. Assume for a fixed $i$ with
$0\leq i \leq\frac{n-1}{2}$, the set of weights at each $P_k$ with $k\leq i$ and $k\geq n-i$ is
$$\big\{w_{kj} = \phi(P_j) -\phi(P_k)\big\}_{j\neq k, n-k}\cup\big\{w_{k, n-k}=\frac{1}{2}\big(\phi(P_{n-k})-\phi(P_k)\big)\big\},$$
and for each $k < i$ and $k > n-i$, $w_{kj}$ is a weight between $P_k$ and $P_j$
for each $j\neq k$. Then $w_{ij}$ is a weight between $P_i$ and $P_j$ for each 
$j\neq i$, and $w_{n-i, j}$ is a weight between $P_{n-i}$ and $P_j$ for each $j\neq n-i$.
\end{lemma}
\begin{proof}
First assume $i=0$. Clearly $w_{0n}$ is a weight between $P_0$ and $P_n$.
We claim that each $w_{0j}$ in the subset of weights $\{w_{0j}\}_{1\leq j\leq n-1}$ is a weight between $P_0$ and some $P_l$ with $1\leq l\leq n-1$. If the claim holds, then the same proof of 
Lemma~\ref{lempq} for $i=0$ yields that each $w_{0j}$ in $\{w_{0j}\}_{1\leq j\leq n-1}$ is a weight between $P_0$ and $P_j$. First, by Lemma~\ref{10}, $w_{01}$ is a weight between $P_0$ and $P_1$. Clearly, if $w_{0j} > \frac{1}{2}\big(\phi(P_n)-\phi(P_0)\big)=w_{0n}$, then $w_{0j}$ can only be a weight between $P_0$ and $P_j$. Now assume for some $j$ with $1< j \leq n-1$, $w_{0j} < \frac{1}{2}\big(\phi(P_n)-\phi(P_0)\big)$ is the first one (from bigger ones) such that it {\it can only be} a weight between $P_0$ and $P_n$, then $w_{0j} > 1$ (since $j > 1$), and $w_{0j}|\big(\phi(P_n)-\phi(P_0)\big)$. Then $w_{0j}|\big(\phi(P_n)-\phi(P_j)\big)$, so $w_{0j}|w_{nj}$. There exists $l$ so that $w_{0j} = -w_{nl}$. By the above, 
$w_{0l} > \frac{1}{2}\big(\phi(P_n)-\phi(P_0)\big)$ is a weight between
$P_0$ and $P_l$, and we have $w_{0j}|w_{0l}$.
So the connected component $C$ of $M^{\Z_{w_{0j}}}$ containing
$P_0$ and $P_n$ also contains $P_l$ and $P_j$, 
so $\index|_C (P_n)\geq \index|_C (P_0) +4$. 
Applying Lemma~\ref{JT} on $C$, we get that there is an index $2$ fixed point, say $P_m$, in $C$ such that there is a weight $w_{0j}$ between $P_0$ and $P_m$,
contradicting to that $w_{0j}$ has multiplicity $1$ at $P_0$ and that it can only be a weight between $P_0$ and $P_n$.
Hence the claim of the lemma for  $i=0$ follows. Similarly, the claim of the lemma for $n-0 = n$ follows.

For a fixed $i$ with $0 < i \leq\frac{n-1}{2}$, the assumption implies that for 
$w_{ij}$, we only need to show the claim for $i+1\leq j\leq n-i-1$.
For that, we only need to show that none of $w_{ij}$ with $i+1\leq j\leq n-i-1$ can {\it only be} a weight between $P_i$ and $P_{n-i}$. Then the claim for each $w_{ij}$ with $i+1\leq j\leq n-i-1$ will follow the same way as the proof of Lemma~\ref{lempq} for $i\neq 0$. If $w_{i, i+1}=1$, then by Lemma~\ref{JT}, there is a weight $-1$ (of multiplicity $1$) at $P_{i+1}$, so $w_{i, i+1}=1$  is a weight between $P_i$ and $P_{i+1}$.  If $w_{ij} > \frac{1}{2}\big(\phi(P_{n-i})-\phi(P_i)\big)=w_{i, n-i}$, then $w_{ij}$ can only be a weight between $P_i$ and $P_j$. 
Now assume for some first $j$ (from bigger ones) with
$i+1\leq j\leq n-i-1$, $w_{ij} > 1$ and $w_{ij}< w_{i, n-i}$ can only be a weight between $P_i$ and $P_{n-i}$.
Then  $w_{ij} > 1$ and $w_{ij}|\big(\phi(P_{n-i})-\phi(P_i)\big)$.
The rest of the proof is similar to that of the first paragrah for $i=0$, and 
for the index consideration, it is similar to the case  $i\neq 0$ in  the proof of Lemma~\ref{lempq}.
\end{proof}

The sets of weights at all the fixed points are as follows.

\begin{proposition}\label{01i}
Let the circle act on a compact $2n$-dimensional symplectic manifold
$(M, \omega)$ with moment map $\phi\colon M\to\R$. Assume $M^{S^1}=\big\{P_0,  P_1, \cdots, P_n\big\}$ and $[\omega]$ is an integral class. Assume $\phi(P_{n-1})-\phi(P_0)=\phi(P_n)-\phi(P_1)$ and this integer is a weight at some fixed point but $\phi(P_n)-\phi(P_0)$ is not a weight at any fixed point. Then the set of weights at any $P_i$ is
\begin{equation}\label{wi}
\big\{w_{ij} = \phi(P_j) -\phi(P_i)\big\}_{j\neq i, n-i}\cup\big\{w_{i, n-i}=\frac{1}{2}\big(\phi(P_{n-i})-\phi(P_i)\big)\big\}.
\end{equation}
Moreover,
\begin{equation}\label{kn=}
\phi(P_k)-\phi(P_0) = \phi(P_n) -\phi(P_{n-k}), \,\,\,\forall \,\,\,  1\leq k\leq\frac{n-1}{2}.
\end{equation}
Hence the sets of weights are isomorphic to those of the standard circle action on 
$\Gt_2(\R^{n+2})$ as in Example~\ref{grasso}. 
\end{proposition}

\begin{proof}
First, by Lemma~\ref{0n2},  the claim (\ref{wi}) holds for $P_0$, $P_{n-1}$, $P_n$ and $P_1$, and (\ref{kn=}) holds for $k=1$.

Note that by Proposition~\ref{01ring}, $n\geq 3$ is odd, so the number of fixed points $n+1\geq 4$ is even. By Lemma~\ref{ktoi}, for $i=0, 1, n-1, n$, each $w_{ij}$ 
is a weight between $P_i$ and $P_j$ with $j\neq i$.
We use induction. Fix $i$ with $2\leq i \leq \frac{n-1}{2}$. Assume that for each $k$ with $0\leq k < i$, (\ref{wi}) holds for $P_k$ and $P_{n-k}$,  $w_{kj}$ is a weight between $P_k$ and $P_j$ for each $j\neq k$, and $w_{n-k, j}$ is a weight between $P_{n-k}$ and $P_j$ for
each $j\neq n-k$, moreover, (\ref{kn=}) holds for all $1\leq k \leq i-1$.
We will prove all these hold if we replace $k$ by $i$. 
The inductive hypotheses implies that the set of negative weights at $P_i$ is 
\begin{equation}\label{negi2}
\big\{w_{ij}=\phi(P_j)-\phi(P_i)\big\}_{0\leq j\leq i-1},
\end{equation}
a subset of positive weights at $P_i$ is
\begin{equation}\label{subp}
\big\{w_{ij}=\phi(P_j)-\phi(P_i)\big\}_{n-i+1\leq j\leq n},
\end{equation}
and each of the rest of the positive weights at $P_i$ is a weight from $P_i$ to some $P_j$ with $i+1\leq j\leq n-i$, hence is less than $w_{in}=-w_{ni}$.
Since $w_{in}$  is a weight between $P_i$ and $P_n$, by Lemma~\ref{equalmod},
\begin{equation}\label{mod-in}
\mbox{\{weights at $P_i$\}=\{weights at $P_n$\}}  \mod w_{in}.
\end{equation}
Note that in (\ref{mod-in}), we first have the equalities
\begin{equation}\label{j<i}
w_{ij} = w_{nj} + w_{in}, \,\,\,\forall \,\, j \,\,\,\mbox{with $1\leq j\leq i-1$ and  $n-i+1\leq j\leq n-1$}.
\end{equation}
For the remaining negative weight $w_{i0}=-w_{0i}$ at $P_i$, by symmetry, we may assume $|w_{i0}| < w_{in}$.
All the remaining weights at $P_n$ not occuring in (\ref{j<i})  (other than $w_{ni}$) have absolute values less than $w_{in}$. 
So the remaining correspondences in (\ref{mod-in}) can only be
\begin{equation}\label{i0=}
w_{i0} = w_{n, n-i}  + 0 w_{in},\,\,\,\mbox{and}
\end{equation}
\begin{equation}\label{remain}
w_{ij} = w_{nj} + w_{in} \,\,\,\mbox{if $i+1\leq j\leq n-i-1$, and}\,\,\, w_{i,n-i}=w_{n0} + w_{in},
\end{equation}
where for the known $w_{nj}$ with $i+1\leq j\leq n-i-1$, we denote
the corresponding weight at $P_i$ by $w_{ij}$, and for the known $w_{n0}$, we denote the corresponding weight at $P_i$ by $w_{i,n-i}$.
Equation (\ref{i0=}) gives (\ref{kn=}) for $k=i$. 
From (\ref{remain}), we get the subset of positive weights at $P_i$:
\begin{equation}\label{posi2}
\big\{w_{ij} = \phi(P_j)-\phi(P_i)\big\}_{i+1\leq j\leq n-i-1}\cup \big\{w_{i, n-i}=\frac{1}{2}\big(\phi(P_{n-i})-\phi(P_i)\big)\big\}.
\end{equation}
Here,  we used (\ref{kn=}) for $k=i$ to get $w_{i, n-i}$. Equations (\ref{negi2}), (\ref{subp}), (\ref{posi2}), and $w_{in}=-w_{ni}$ give the set of weights, claim (\ref{wi}) for $P_i$.
Similarly, claim (\ref{wi}) for $P_{n-i}$ follows. Finally, by Lemma~\ref{ktoi}, each $w_{ij}$ is a weight between $P_i$ and $P_j$, and each $w_{n-i, j}$ is a weight between $P_{n-i}$ and $P_j$.
\end{proof}

Using Proposition~\ref{01i}, we obtain $c(M)$:

\begin{proposition}\label{weights-tc2}
Let the circle act on a compact $2n$-dimensional symplectic manifold
$(M, \omega)$ with moment map $\phi\colon M\to\R$. Assume $[\omega]$ is an integral class and $M^{S^1}=\big\{P_0,  P_1, \cdots, P_n\big\}$.  Assume $\phi(P_{n-1})-\phi(P_0)=\phi(P_n)-\phi(P_1)$ is a weight  at some fixed point but $\phi(P_n)-\phi(P_0)$ is not a weight for the $S^1$-action. Then the total Chern class of $M$ is 
$c(M) =\frac{(1+[\omega])^{n+2}}{1+2[\omega]}$, hence is
isomorphic to $c\big(\Gt_2(\R^{n+2})\big)$.
\end{proposition}

\begin{proof}
Similar to the proof of Proposition~\ref{weights-tc1}, the restriction map
$$H^*_{S^1}(M; \Z)\to H^*_{S^1}(M^{S^1}; \Z)$$
 is injective. 
Consider the class 
$$\alpha=\frac{\big(1+\ut +\frac{1}{2}\big(\phi(P_n)+\phi(P_0)\big)t\big)\prod_{0\leq j\leq n}\big(1+\ut +\phi(P_j)t\big)}{1+\big(\ut+\phi(P_0)t\big) +\big(\ut+\phi(P_n)t\big)}.$$
By Proposition~\ref{01i}, 
$\big(\phi(P_0) - \phi(P_i)\big) +\big(\phi(P_n) - \phi(P_i)\big)= \phi(P_{n-i})-\phi(P_i)$ for all $0\leq i\leq n$.  Using this, and  Proposition~\ref{01i}, we can check that 
$$\alpha|_{P_i}=c^{S^1}(M)|_{P_i},\,\,\,\forall \,\,\, 0\leq i\leq n.$$
Hence $c^{S^1}(M) = \alpha$.  Restricting this to ordinary cohomology, we obtain our claim.
\end{proof}


\begin{thebibliography}{99}
\bibitem{GS} L. Godinho and S. Sabatini, {\em New tools for classifying Hamiltonian circle actions with isolated fixed points}, Foundations in Computational Mathematics, 14(2014), 791-860.
\bibitem{H} A. Hattori, {\em $S^1$ actions on unitary manifolds and quasi-ample line bundles}, J. Fac. Sci. Univ. Tokyo Sect. IA, Math., {\bf 31},
no. 3  (1984), 433-486.
\bibitem{JT} D. Jang and S. Tolman, {\em Hamiltonian circle actions on eight dimensional manifolds with minimal fixed sets},  Transformation groups, vol. {\bf 22}, no. 2, 2017, 353-359.
\bibitem{KO} S. Kobayashi and T. Ochiai, {\em Charaterizations of
complex projective spaces and hyperquadrics}, J. Math. Kyoto Univ. {\bf 13-1} (1973), 31-47.
\bibitem{K} F. Kirwan, {\em Cohomology of Quotients in Symplectic and Algebraic Geometry}, Princeton University Press, 1984.
\bibitem {L0} H. Li, {\em $\pi_1$ of Hamiltonian $S^1$-manifolds}, Proc.  Amer. Math. Soc. {\bf 131} (2003), no. 11, 3579-3582.
\bibitem{L} H. Li, {\em Certain circle actions on K\"ahler manifolds}, International Mathematics Research Notices,  Vol. 2014, No. 18, 5187-5202.
\bibitem{LOS} H. Li, M. Olbermann and D. Stanley,
{\em  One connectivity and finiteness of Hamiltonian $S^1$-manifolds
with minimal fixed sets}, J. of Lond. Math. Soc. (2)  {\bf 92}  (2015),  no. 2, 284-310.
\bibitem{LT} H. Li and S. Tolman, {\em Hamiltonian circle actions with minimal fixed sets},  International
Journal of Mathematics {\bf 23}, no. 8 (2012), 1250071.
\bibitem{M} D. McDuff, {\em Some $6$-dimensional Hamiltonian $S^1$ manifolds}, J. of Topology, {\bf 2} (2009), no. 3, 589-623.
\bibitem{Mo} D. Morton, {GKM manifolds with low Betti numbers}, Ph.D thesis, University of Illinois at Urbana-Champaign, 2011.
\bibitem{P} T. Petrie,  {\em Smooth $S^1$ actions on homotopy complex projective spaces and related
 topics}, Bull. Amer. Math. Soc. {\bf 78} (1972) 105-153.
\bibitem{T} S. Tolman, {\em On a symplectic generalization of Petrie's
conjecture},  Trans. Amer. Math. Soc., {\bf 362} (2010), 3963-3996.
\bibitem{TW} S. Toman and J. Weitsman, {\em The cohomology rings of symplectic quotients}, Comm. Anal. Geom., {\bf 11} (2003), no. 4, 751-773.
\end{thebibliography}
\end{document}